\documentclass[12pt,letterpaper]{amsart}
\usepackage{tikz}
\usepackage{array}
\usepackage{xcolor}
\usepackage{tikz-cd}
\usepackage{hyperref}
\usepackage{cleveref}
\usepackage{rotating}
\usepackage{rotating}
\usetikzlibrary{mindmap,trees,positioning}
\usepackage{eucal}

\usepackage[T1]{fontenc}
\usepackage[utf8]{inputenc}
\usepackage{lmodern}
\usepackage{amsmath, amssymb, amsfonts, amscd}

\usepackage{graphicx}
\usepackage{subcaption}
\usepackage{booktabs}
\usepackage[all]{xy}

\usepackage{geometry}
\newtheorem{theorem}{Theorem}[section]
\newtheorem{lemma}[theorem]{Lemma}
\theoremstyle{definition}
\newtheorem{definition}[theorem]{Definition}
\newtheorem{example}[theorem]{Example}
\newtheorem{proposition}[theorem]{Proposition}

\theoremstyle{remark}
\newtheorem{remark}[theorem]{Remark}

\DeclareMathOperator{\Per}{Per}

\newcommand{\Roll}{\text{\rm \bf Roll}}
\newcommand{\Adm}{\text{\rm \bf Adm}}

\newcommand{\R}{{\mathbb{R}}}

\newcommand{\rank}{\text{\rm rank}}
\newcommand{\conv}{\text{\rm conv}}
\newcommand{\graphscale}{0.45}

\newcounter{algorithm}

\definecolor{wiseblue}{rgb}{0.14,0.28,0.48}
\definecolor{deepteal}{rgb}{0.10,0.32,0.36}
\definecolor{aquamed}{rgb}{0.00,0.55,0.65}
\definecolor{softgrey}{rgb}{0.35,0.35,0.35}
\definecolor{col0}{RGB}{109,166,206}
\definecolor{col1}{RGB}{255,171,98}
\definecolor{col2}{RGB}{117,193,117}
\definecolor{col3}{RGB}{228,114,115}
\definecolor{col4}{RGB}{185,156,212}
\definecolor{col5}{RGB}{180,145,138}
\definecolor{col6}{RGB}{236,166,215}
\definecolor{col7}{RGB}{171,171,171}
\definecolor{col8}{RGB}{211,212,111}
\definecolor{col9}{RGB}{104,212,223}

\hypersetup{
  colorlinks=true,
  linkcolor=wiseblue,
  citecolor=deepteal,
  urlcolor=aquamed,
  pdfauthor={Jos\'e Ayala Hoffmann},
  pdftitle={Intrinsic Variational Geometry in Hard Disk Clusters}
}

\usepackage{etoolbox}
\makeatletter

\pretocmd{\@tocline}{\color{wiseblue}}{}{}
\apptocmd{\@tocline}{\color{black}}{}{}
\makeatother

\makeatletter
\def\@biblabel#1{[#1]}
\makeatother

\begin{document}
\title{Intrinsic Geometry of Hard Disk Clusters}

\author{Jos\'e Ayala Hoffmann}
\address{Universidad de Tarapac\'a, Iquique, Chile}
\email{jayalhoff@gmail.com}

\author{Fabi\'an Henry Vilaxa}
\address{Universidad de Tarapac\'a, Iquique, Chile}
\email{fabian.henry.vilaxa@alumnos.uta.cl}

\subjclass[2020]{Primary 52C15; Secondary 52A40, 49Q10, 51M25}

\keywords{disk packing, hard disks, perimeter minimisation, convex hull, contact graphs, rigidity}

\begin{abstract}
\baselineskip=20 true pt
\baselineskip=1.1\normalbaselineskip
Put \(n\) identical coins on a table with no two overlapping. Which arrangement makes the perimeter of the convex hull of the cluster as small as possible? Despite its elementary statement, the solution of this problem is known only up to four disks.

We produce a calculus for hard disk clusters of arbitrary finite size, providing class criticality conditions, first order descent tests, and second order spectral criteria for perimeter minimisation. A central difficulty is that the perimeter formula changes with the hull combinatorics, while the admissible first order geometry changes with the realised contacts. Our approach is guided by the principle that the realised geometry intrinsically determines both the local form of the functional and the admissible motions.

As an application, this article takes the first step beyond four disks by providing a solution for the five disk case. The minimum perimeter is \(10+2\pi\), attained in exactly three realised classes. Two admit perimeter preserving flexes, of dimensions one and two, while the third is rigid modulo rigid motions. The first order theory provides
pruning criteria, and the reduced admissible space together with its
intrinsic Hessian distinguish rigidity, second order instability, and
perimeter flat degeneracy. 
\end{abstract}
\maketitle

\baselineskip=20 true pt
\baselineskip=1.1\normalbaselineskip

\section{Prelude}

What is the smallest possible perimeter of the convex hull of a cluster of $n$ non overlapping unit disks in the plane?

Despite its elementary formulation, the problem quickly becomes nontrivial beyond four disks. By the Minkowski-Steiner formula, this problem is equivalent to minimising the perimeter of the convex hull of \(n\) points whose pairwise distances are at least two. The difficulty, however, is not the passage from disks to centres, but the geometry of the feasible space itself. Two independent combinatorial phenomena destroy global smoothness. First, the active non overlap constraints change as contacts are created or broken. Second, the local perimeter formula changes whenever the hull vertex set or its cyclic order changes. Consequently, neither the admissible first order geometry nor the perimeter functional can be described globally by a single smooth constrained problem.

In bar--joint rigidity theory, one commonly starts with an abstract graph,
prescribes its edges as bars, imposes fixed edge length equations, and then
differentiates those equations to obtain the infinitesimal rigidity equations
and the associated rigidity operator
\cite{asimow1978,connelly1982,whiteley1997}. A main conceptual point of this article is that the primary object is the realised non overlap geometry, rather than a prescribed abstract contact framework. The realised contact normals determine the local first order admissible structure, while the contact operator merely records this structure in matrix form.

At first order, the preservation of contacts depends only on the common tangent. If ${\bf c}_i$ and ${\bf c}_j$ are
the centres of two tangent disks and
$\delta{\bf c}_i,\delta{\bf c}_j$ are infinitesimal displacements, then the
realised contact imposes the first order non overlap condition
\[
\langle {\bf u}_{ij},
\delta{\bf c}_j-\delta{\bf c}_i\rangle\ge 0,
\qquad
{\bf u}_{ij}
=
\frac{{\bf c}_j-{\bf c}_i}
{\|{\bf c}_j-{\bf c}_i\|},
\]
while preservation of the realised tangency gives its equality case
\[
\langle {\bf u}_{ij},
\delta{\bf c}_j-\delta{\bf c}_i\rangle=0.
\]

The scalar
$
\langle {\bf u}_{ij},
\delta{\bf c}_j-\delta{\bf c}_i\rangle
$
has a direct geometric interpretation. It is the normal component of the relative velocity at the realised contact. Positive values open the contact to first order, zero preserves the
tangency to first order, and negative values are incompatible with first
order non overlap. The realised contact normals determine the local one sided admissible cone
\(\Adm({\bf c})\) and, within it, the rolling space
\(\Roll({\bf c})\), consisting of the infinitesimal motions that preserve
every contact to first order. The contact operator \(A({\bf c})\), equivalent to the rigidity operator of
the realised contact framework, represents the rolling conditions, with
\(\ker A({\bf c})=\Roll({\bf c})\). The novelty lies in the surrounding realised geometry and its variational interpretation.

The approach is intrinsic in both its coordinate free formulation and its
choice of functional. We impose no auxiliary energy. Instead, we study a
canonical geometric quantity of the cluster itself, the perimeter of its
convex hull. In our settings, the objective and the admissible first order directions are determined by the geometry of the cluster itself.

Smooth calculus becomes legitimate only after both the realised contacts
and the cyclic hull structure are fixed. On a fixed hull and contact class
the perimeter has a smooth local expression, and at a regular relative
interior point the tangent space of the class is precisely the rolling
space. The first variation then tests class criticality, where the perimeter
gradient must lie in the image of \(A({\bf c})^{\top}\), yielding the
multiplier equation. A first order descent
direction in the rolling space excludes a configuration immediately, while
the hull leaf obstruction provides an independent combinatorial exclusion
criterion.

The second variation at a class critical configuration, restricted to the
reduced rolling space obtained from \(\Roll({\bf c})\) by removing
infinitesimal rigid motions, defines the intrinsic Hessian. If this
reduced space is trivial, then every infinitesimal rolling motion is induced
by a rigid motion. Otherwise, a negative eigenvalue of the intrinsic Hessian
certifies second order instability, while a nontrivial kernel of the
intrinsic Hessian signals second order degeneracy.

Local minimality within a fixed hull and contact class does not immediately imply local minimality in the full configuration space, where contacts may open or form and the hull type may change. Ambient minimality therefore requires, in addition to the intrinsic class analysis, one sided no descent checks on the incident classes governing exits from the class.

Unlike asymptotic packing problems, finite disk problems depend stronlgy on boundary geometry and realised contact structure, as observed by Hopkins, Stillinger, and Torquato \cite{hopkinsstillingertorquato}.
Classical finite packing inequalities are due to Oler, Groemer, Wegner, and
Fejes T\'oth \cite{oler1961,groemer1963,wegner1986,fejestothbook}. Closest
to our setting, Sch\"urmann formulated the minimum perimeter problem for
finite disk clusters and proved structural restrictions on extremisers
\cite{schurmann2002}, while Kallrath, Ryu, Song, Lee, and Kim gave high
accuracy numerical solutions for minimum convex hulls of disks
\cite{kallrath2021}. Rigidity of sticky discs was studied by Connelly, Gortler, and Theran \cite{connellygortlertheran}, and the topology of confined disk configuration spaces by Baryshnikov, Bubenik, and Kahle and by Carlsson, Gorham, Kahle, and Mason \cite{baryshnikovbubenikkahle,carlssongorhamkahlemason}. The finite disk perimeter problem also arises in geometric knot theory through
thick tubes and their disk cross sections, as studied by Ayala and Hass
\cite{ayalahass}.

The paper is organised as follows. Section~\ref{sec:config-space} sets up
the hard disk configuration space, the Minkowski reduction, and the
existence of minimisers. Section~\ref{sec:fixed-class} develops the
realised contact geometry, the rolling space, fixed hull and contact
classes, and the multiplier criterion for class criticality.
Section~\ref{sec:obstructions} introduces the first variation and proves the hull leaf obstruction, while
Section~\ref{sec:second-variation} develops the intrinsic second variation
and its reduced diagnostics. In Section~\ref{sec:n5}, Theorem~\ref{thm:n5-final} we solve the minimum perimeter problem for five
disks. Appendix~\ref{sec:examples-four-disks} applies the second order theory to
classical four disk configurations through explicit spectral examples. Appendix~\ref{app:computational-verification}
records the computational checks used in the five disk analysis. The computational
implementations, input data, and recorded output are in~\cite{repo}.

\section{The hard disk configuration space and the perimeter functional}
\label{sec:config-space}

\subsection{The configuration space $\mathcal D_n$}

A finite collection of pairwise non overlapping disks of common radius
\(r>0\) is called a hard disk configuration. Uniform scaling of the entire
configuration by a positive factor preserves contact relations and multiplies
the perimeter of its convex hull by the same factor. It suffices to
consider the case \(r=1\).

\begin{definition}
The configuration space of $n$ unit hard disks is
\[
\mathcal D_n=
\left\{
{\bf c}=({\bf c}_1,\dots,{\bf c}_n)\in(\R^2)^n:
\|{\bf c}_i-{\bf c}_j\|\ge 2 \text{ for all } i\ne j
\right\}.
\]
Each ${\bf c}\in\mathcal D_n$ represents the cluster of unit disks centred at
${\bf c}_1,\dots,{\bf c}_n$.
\end{definition}

A pairwise non overlap inequality
$
\|{\bf c}_i-{\bf c}_j\|\ge2
$
is said to be active at a configuration \({\bf c}\) if equality holds.
Equivalently, the corresponding pair of disks is in contact.

From the intrinsic point of view, the primitive object is the realised hard disk configuration, not an abstract graph prescribed in advance. Its contacts are the active non overlap constraints, while its realised hull data determine the local expression for the perimeter. The nonlinear inequalities defining $\mathcal D_n$ therefore govern the ambient feasible geometry. The contact graph and rolling space are derived from this geometry. The former records the realised contacts, while the latter consists precisely of the infinitesimal motions for which every active first order non overlap inequality holds with equality.

\begin{figure}[ht]
\centering 
\setlength{\tabcolsep}{8pt}
\renewcommand{\arraystretch}{1.5}
\tikzset{diskhull/.style={black!100,line width=.5pt}}
\begin{tabular}{cc} 
\begin{tikzpicture}[scale=\graphscale,line cap=round,line join=round]
\draw[col0,line width=1.2pt] (-2.000,0.000) -- (0.000,0.000);
\draw[col1,line width=1.2pt] (0.000,0.000) -- (2.000,0.000);
\fill[black!17,opacity=0.45] (-2.000,0.000) circle (1.000);
\fill[black!17,opacity=0.45] (0.000,0.000) circle (1.000);
\fill[black!17,opacity=0.45] (2.000,0.000) circle (1.000);
\draw[gray,line width=0.8pt] (-2.000,0.000) circle (1.000);
\draw[gray,line width=0.8pt] (0.000,0.000) circle (1.000);
\draw[gray,line width=0.8pt] (2.000,0.000) circle (1.000);
\draw[diskhull]
  (-2.000,-1.000) -- (2.000,-1.000)
  arc[start angle=-90,end angle=90,radius=1.000]
  -- (-2.000,1.000)
  arc[start angle=90,end angle=270,radius=1.000];
\filldraw[black] (-2.000,0.000) circle (0.05);
\filldraw[black] (0.000,0.000) circle (0.05);
\filldraw[black] (2.000,0.000) circle (0.05);
\end{tikzpicture}
\hspace{0.1cm}
&
\begin{tikzpicture}[scale=\graphscale,line cap=round,line join=round]
\draw[col0,line width=1.2pt] (-1.000,-0.577) -- (1.000,-0.577);
\draw[col1,line width=1.2pt] (-1.000,-0.577) -- (0.000,1.155);
\draw[col2,line width=1.2pt] (1.000,-0.577) -- (0.000,1.155);
\fill[black!17,opacity=0.45] (-1.000,-0.577) circle (1.000);
\fill[black!17,opacity=0.45] (1.000,-0.577) circle (1.000);
\fill[black!17,opacity=0.45] (0.000,1.155) circle (1.000);
\draw[gray,line width=0.8pt] (-1.000,-0.577) circle (1.000);
\draw[gray,line width=0.8pt] (1.000,-0.577) circle (1.000);
\draw[gray,line width=0.8pt] (0.000,1.155) circle (1.000);
\draw[diskhull]
  (-1.000,-1.577)
  -- (1.000,-1.577)
  arc[start angle=-90,end angle=30,radius=1.000]
  -- (0.866,1.655)
  arc[start angle=30,end angle=150,radius=1.000]
  -- (-1.866,-0.077)
  arc[start angle=150,end angle=270,radius=1.000];
\filldraw[black] (-1.000,-0.577) circle (0.05);
\filldraw[black] (1.000,-0.577) circle (0.05);
\filldraw[black] (0.000,1.155) circle (0.05);
\end{tikzpicture}
\vspace{0.1cm}
\end{tabular}

\begin{tabular}{ccccc}
\begin{tikzpicture}[scale=\graphscale,line cap=round,line join=round]
\draw[col0,line width=1.2pt] (-3.000,0.000) -- (-1.000,0.000);
\draw[col1,line width=1.2pt] (-1.000,0.000) -- (1.000,0.000);
\draw[col2,line width=1.2pt] (1.000,0.000) -- (3.000,0.000);
\fill[black!17,opacity=0.45] (-3.000,0.000) circle (1.000);
\fill[black!17,opacity=0.45] (-1.000,0.000) circle (1.000);
\fill[black!17,opacity=0.45] (1.000,0.000) circle (1.000);
\fill[black!17,opacity=0.45] (3.000,0.000) circle (1.000);
\draw[gray,line width=0.8pt] (-3.000,0.000) circle (1.000);
\draw[gray,line width=0.8pt] (-1.000,0.000) circle (1.000);
\draw[gray,line width=0.8pt] (1.000,0.000) circle (1.000);
\draw[gray,line width=0.8pt] (3.000,0.000) circle (1.000);
\draw[diskhull]
  (-3.000,-1.000) -- (3.000,-1.000)
  arc[start angle=-90,end angle=90,radius=1.000]
  -- (-3.000,1.000)
  arc[start angle=90,end angle=270,radius=1.000];
\filldraw[black] (-3.000,0.000) circle (0.05);
\filldraw[black] (-1.000,0.000) circle (0.05);
\filldraw[black] (1.000,0.000) circle (0.05);
\filldraw[black] (3.000,0.000) circle (0.05);
\end{tikzpicture}
\vspace{0.1cm}
&
\begin{tikzpicture}[scale=\graphscale,line cap=round,line join=round]
\draw[col0,line width=1.2pt] (-1.000,-1.000) -- (1.000,-1.000);
\draw[col1,line width=1.2pt] (1.000,-1.000) -- (1.000,1.000);
\draw[col2,line width=1.2pt] (1.000,1.000) -- (-1.000,1.000);
\draw[col3,line width=1.2pt] (-1.000,1.000) -- (-1.000,-1.000);
\fill[black!17,opacity=0.45] (-1.000,-1.000) circle (1.000);
\fill[black!17,opacity=0.45] (1.000,-1.000) circle (1.000);
\fill[black!17,opacity=0.45] (1.000,1.000) circle (1.000);
\fill[black!17,opacity=0.45] (-1.000,1.000) circle (1.000);
\draw[gray,line width=0.8pt] (-1.000,-1.000) circle (1.000);
\draw[gray,line width=0.8pt] (1.000,-1.000) circle (1.000);
\draw[gray,line width=0.8pt] (1.000,1.000) circle (1.000);
\draw[gray,line width=0.8pt] (-1.000,1.000) circle (1.000);
\draw[diskhull]
  (-1.000,-2.000)
  -- (1.000,-2.000)
  arc[start angle=-90,end angle=0,radius=1.000]
  -- (2.000,1.000)
  arc[start angle=0,end angle=90,radius=1.000]
  -- (-1.000,2.000)
  arc[start angle=90,end angle=180,radius=1.000]
  -- (-2.000,-1.000)
  arc[start angle=180,end angle=270,radius=1.000];
\filldraw[black] (-1.000,-1.000) circle (0.05);
\filldraw[black] (1.000,-1.000) circle (0.05);
\filldraw[black] (1.000,1.000) circle (0.05);
\filldraw[black] (-1.000,1.000) circle (0.05);
\end{tikzpicture}
&
\begin{tikzpicture}[scale=\graphscale,line cap=round,line join=round]
\draw[col0,line width=1.2pt] (-1.732,0.000) -- (0.000,1.000);
\draw[col1,line width=1.2pt] (0.000,1.000) -- (1.732,0.000);
\draw[col2,line width=1.2pt] (1.732,0.000) -- (0.000,-1.000);
\draw[col3,line width=1.2pt] (0.000,-1.000) -- (0.000,1.000);
\draw[col4,line width=1.2pt] (0.000,-1.000) -- (-1.732,0.000);
\fill[black!17,opacity=0.45] (-1.732,0.000) circle (1.000);
\fill[black!17,opacity=0.45] (0.000,1.000) circle (1.000);
\fill[black!17,opacity=0.45] (1.732,0.000) circle (1.000);
\fill[black!17,opacity=0.45] (0.000,-1.000) circle (1.000);
\draw[gray,line width=0.8pt] (-1.732,0.000) circle (1.000);
\draw[gray,line width=0.8pt] (0.000,1.000) circle (1.000);
\draw[gray,line width=0.8pt] (1.732,0.000) circle (1.000);
\draw[gray,line width=0.8pt] (0.000,-1.000) circle (1.000);
\draw[diskhull]
  (-2.232,-0.866)
  -- (-0.500,-1.866)
  arc[start angle=-120,end angle=-60,radius=1.000]
  -- (2.232,-0.866)
  arc[start angle=-60,end angle=60,radius=1.000]
  -- (0.500,1.866)
  arc[start angle=60,end angle=120,radius=1.000]
  -- (-2.232,0.866)
  arc[start angle=120,end angle=240,radius=1.000];
\filldraw[black] (-1.732,0.000) circle (0.05);
\filldraw[black] (0.000,1.000) circle (0.05);
\filldraw[black] (1.732,0.000) circle (0.05);
\filldraw[black] (0.000,-1.000) circle (0.05);
\end{tikzpicture}
&
\begin{tikzpicture}[scale=\graphscale,line cap=round,line join=round]
\draw[col0,line width=1.2pt] (-2.366,0.000) -- (-0.366,0.000);
\draw[col1,line width=1.2pt] (-0.366,0.000) -- (1.366,1.000);
\draw[col2,line width=1.2pt] (1.366,1.000) -- (1.366,-1.000);
\draw[col3,line width=1.2pt] (1.366,-1.000) -- (-0.366,0.000);
\fill[black!17,opacity=0.45] (-2.366,0.000) circle (1.000);
\fill[black!17,opacity=0.45] (-0.366,0.000) circle (1.000);
\fill[black!17,opacity=0.45] (1.366,1.000) circle (1.000);
\fill[black!17,opacity=0.45] (1.366,-1.000) circle (1.000);
\draw[gray,line width=0.8pt] (-2.366,0.000) circle (1.000);
\draw[gray,line width=0.8pt] (-0.366,0.000) circle (1.000);
\draw[gray,line width=0.8pt] (1.366,1.000) circle (1.000);
\draw[gray,line width=0.8pt] (1.366,-1.000) circle (1.000);
\draw[diskhull]
  (-2.625,-0.966)
  -- (1.107,-1.966)
  arc[start angle=-105,end angle=0,radius=1.000]
  -- (2.366,1.000)
  arc[start angle=0,end angle=105,radius=1.000]
  -- (-2.625,0.966)
  arc[start angle=105,end angle=255,radius=1.000];
\filldraw[black] (-2.366,0.000) circle (0.05);
\filldraw[black] (-0.366,0.000) circle (0.05);
\filldraw[black] (1.366,1.000) circle (0.05);
\filldraw[black] (1.366,-1.000) circle (0.05);
\end{tikzpicture}
\hspace{0.1cm}
\begin{tikzpicture}[scale=\graphscale,line cap=round,line join=round]
\draw[col0,line width=1.2pt] (-2.000,0.000) -- (0.000,0.000);
\draw[col1,line width=1.2pt] (0.000,0.000) -- (1.000,1.732);
\draw[col2,line width=1.2pt] (0.000,0.000) -- (1.000,-1.732);
\fill[black!17,opacity=0.45] (-2.000,0.000) circle (1.000);
\fill[black!17,opacity=0.45] (0.000,0.000) circle (1.000);
\fill[black!17,opacity=0.45] (1.000,1.732) circle (1.000);
\fill[black!17,opacity=0.45] (1.000,-1.732) circle (1.000);
\draw[gray,line width=0.8pt] (-2.000,0.000) circle (1.000);
\draw[gray,line width=0.8pt] (0.000,0.000) circle (1.000);
\draw[gray,line width=0.8pt] (1.000,1.732) circle (1.000);
\draw[gray,line width=0.8pt] (1.000,-1.732) circle (1.000);
\draw[diskhull]
  (-2.500,-0.866)
  -- (0.500,-2.598)
  arc[start angle=-120,end angle=0,radius=1.000]
  -- (2.000,1.732)
  arc[start angle=0,end angle=120,radius=1.000]
  -- (-2.500,0.866)
  arc[start angle=120,end angle=240,radius=1.000];
\filldraw[black] (-2.000,0.000) circle (0.05);
\filldraw[black] (0.000,0.000) circle (0.05);
\filldraw[black] (1.000,1.732) circle (0.05);
\filldraw[black] (1.000,-1.732) circle (0.05);
\end{tikzpicture}
\end{tabular}
\caption{Complete list of connected contact graph types for \(n=3\) and \(n=4\). For these cases, the minimum perimeter problem can be checked by hand using elementary planar geometry. The list sizes are \(13\) for \(n=5\), \(46\) for \(n=6\), \(162\) for \(n=7\), and \(715\) for \(n=8\). This growth suggests the computational character of the problem, once a finite reduction has been established. These values are entries of the connected penny graph sequence OEIS A085632 \cite{oeisA085632}.}
\label{fig:34}
\end{figure}

Note that $\mathcal D_n$ is closed but not compact since configurations may escape to infinity. After quotienting by rigid motions and imposing a uniform diameter bound, the resulting configuration space is compact.

\subsection{Cluster hull, centre hull, and the Minkowski shift}
Given ${\bf c}=({\bf c}_1,\dots,{\bf c}_n)$, let
\[
D_i=\{{\bf x}\in\R^2:\|{\bf x}-{\bf c}_i\|\le 1\}.
\]
Define the cluster hull
\[
h({\bf c})=\conv\left(\bigcup_{i=1}^n D_i\right)
\]
and the centre hull
\[
P({\bf c})=\conv\{{\bf c}_1,\dots,{\bf c}_n\}.
\]
Since $h({\bf c})$ is a compact convex subset of $\R^2$, its boundary
$\partial h({\bf c})$ is a rectifiable closed curve. We define the perimeter
of the hard disk cluster by
\[
\Per({\bf c})
=
\operatorname{Length}(\partial h({\bf c})).
\]
We emphasise that $\Per({\bf c})$ is the perimeter of the cluster hull, not the length of the boundary of the union of disks.

\begin{proposition}
\label{prop:minkowski-shift}
For every ${\bf c}\in\mathcal D_n$ one has
\[
h({\bf c})=P({\bf c})\oplus D,
\]
where $D$ is the unit disk centred at the origin. Consequently,
\[
\Per({\bf c})=\Per(P({\bf c}))+2\pi.
\]
\end{proposition}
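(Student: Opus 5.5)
The plan has two parts: first establish the set identity \(h({\bf c})=P({\bf c})\oplus D\) by elementary convexity, then deduce the perimeter formula from the additivity of support functions under Minkowski sums.

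\emph{Step 1: the set identity.} Each disk is a translate, \(D_i={\bf c}_i+D\), so \(\bigcup_i D_i=\{{\bf c}_1,\dots,{\bf c}_n\}\oplus D\). Two inclusions are needed.

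\begin{itemize}
\item[(a)] \(P({\bf c})\oplus D\) is convex, being a Minkowski sum of convex sets. It contains every \(D_i\), so it contains \(h({\bf c})\).
\item[(b)] Conversely, take \({\bf p}=\sum_i\lambda_i{\bf c}_i\) with \(\lambda_i\ge0\), \(\sum_i\lambda_i=1\), and take \({\bf d}\in D\). Then
\[
{\bf p}+{\bf d}=\sum_i\lambda_i({\bf c}_i+{\bf d}),
\]
which is a convex combination of the points \({\bf c}_i+{\bf d}\in D_i\). Hence \({\bf p}+{\bf d}\in h({\bf c})\).
\end{itemize}

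This gives equality. Note that the non overlap condition defining \(\mathcal D_n\) plays no role; the identity holds for arbitrary centres.

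\emph{Step 2: the perimeter formula.} I would use support functions. For compact convex sets, \(h_{K\oplus L}=h_K+h_L\). In the plane, the Cauchy formula gives
\[
\Per(K)=\int_0^{2\pi}h_K(\theta)\,d\theta
\]
for every compact convex \(K\), including degenerate ones. For a segment, it returns twice the length, which is the correct convention for \(\Per(P({\bf c}))\) when the centres are collinear.

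Since \(h_D\equiv1\), we obtain
\[
\Per(h({\bf c}))=\Per(P({\bf c}))+2\pi.
\]

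\emph{Alternative for Step 2.} A more elementary route avoids Cauchy's formula. When \(P({\bf c})\) is a polygon with vertices \({\bf c}_{i_1},\dots,{\bf c}_{i_k}\), the boundary of \(P({\bf c})\oplus D\) decomposes into:
\begin{itemize}
\item translates of the edges of \(P({\bf c})\) by their outward unit normals, whose total length is \(\Per(P({\bf c}))\);
\item unit circular arcs at each vertex, whose angles equal the exterior angles and therefore sum to \(2\pi\).
\end{itemize}
The degenerate cases, a point or a segment, can be checked directly.

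\emph{Main obstacle.} The only delicate point is justifying the perimeter formula uniformly. This includes the degenerate collinear case, and also the fact that \(\Per\) is defined here as the length of \(\partial h\) rather than through the support function. I would cite Cauchy's formula for planar convex bodies, which reconciles these, instead of handling the boundary decomposition case by case.
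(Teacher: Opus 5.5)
Your proof is correct and follows the paper's route. The paper obtains \(h({\bf c})=P({\bf c})\oplus D\) by taking convex hulls of \(\bigcup_i D_i=\{{\bf c}_1,\dots,{\bf c}_n\}\oplus D\), which is exactly what your two inclusions verify, and then cites the planar Steiner formula, which your Cauchy formula plus additivity of support functions argument derives. Your remark that a collinear centre hull must be assigned perimeter twice the segment length is a useful clarification the paper leaves implicit.
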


\begin{proof}
Since
\[
\{{\bf c}_1,\dots,{\bf c}_n\}\oplus D=\bigcup_{i=1}^n D_i,
\]
taking convex hulls gives
\[
h({\bf c})
=
\conv\left(\bigcup_{i=1}^n D_i\right)
=
\conv\left(\{{\bf c}_1,\dots,{\bf c}_n\}\oplus D\right)
=
P({\bf c})\oplus D.
\]
The perimeter identity is the planar Steiner formula for outer parallel bodies.
\end{proof}

\begin{remark}[Continuity and local smoothness of the perimeter]
\label{rem:smooth}
The map ${\bf c}\mapsto P({\bf c})$ is continuous with respect to the Hausdorff topology on compact convex subsets of $\R^2$, and perimeter is continuous on planar convex bodies. Hence $\Per$ is continuous on $\mathcal D_n$.

Moreover, on any neighbourhood in $\mathcal D_n$ where the hull vertex set and its cyclic order are fixed, one has
\[
\Per({\bf d})=\sum_{(i,j)\in\mathcal B}\|{\bf d}_j-{\bf d}_i\|+2\pi
\]
for a fixed cyclic hull edge list $\mathcal B$. In particular, $\Per$ is a smooth function of the centres on such a neighbourhood.
\end{remark}

\begin{figure}[htbp]
\centering
\begin{tikzpicture}[scale=0.9, line cap=round, line join=round]
\tikzset{diskhull/.style={black!100,line width=.5pt}}
\fill[black!17, opacity=0.45] (-1.6,1.2) circle (1.000);
\fill[black!17, opacity=0.45] (-1.414,-1.414) circle (1.000);
\fill[black!17, opacity=0.45] (0,0) circle (1.000);
\fill[black!17, opacity=0.45] (1.732,1.0) circle (1.000);
\fill[black!17, opacity=0.45] (2.932,-0.6) circle (1.000);
\draw[gray, line width=0.8pt] (-1.6,1.2) circle (1.000);
\draw[gray, line width=0.8pt] (-1.414,-1.414) circle (1.000);
\draw[gray, line width=0.8pt] (0,0) circle (1.000);
\draw[gray, line width=0.8pt] (1.732,1.0) circle (1.000);
\draw[gray, line width=0.8pt] (2.932,-0.6) circle (1.000);
\draw[black!80, line width=0.8pt]
  (-1.6,1.2) -- (1.732,1.0) -- (2.932,-0.6) -- (-1.414,-1.414) -- cycle;
\draw[gray, line width=0.8pt] (-1.6,1.2) -- (-2.597,1.129);
\draw[gray, line width=0.8pt] (-1.6,1.2) -- (-1.540,2.198);
\draw[gray, line width=0.8pt] (1.732,1.0) -- (1.792,1.998);
\draw[gray, line width=0.8pt] (1.732,1.0) -- (2.532,1.6);
\draw[gray, line width=0.8pt] (2.932,-0.6) -- (3.732,0.0);
\draw[gray, line width=0.8pt] (2.932,-0.6) -- (3.116,-1.583);
\draw[gray, line width=0.8pt] (-1.414,-1.414) -- (-1.230,-2.397);
\draw[gray, line width=0.8pt] (-1.414,-1.414) -- (-2.411,-1.485);
\draw[diskhull]
  (-2.597,1.129) -- (-2.411,-1.485)
  arc[start angle=184.1,end angle=280.6,radius=1.000]
  -- (3.116,-1.583)
  arc[start angle=-79.4,end angle=36.9,radius=1.000]
  -- (2.532,1.6)
  arc[start angle=36.9,end angle=86.6,radius=1.000]
  -- (-1.540,2.198)
  arc[start angle=86.6,end angle=184.1,radius=1.000];
\draw[col0, line width=1.2pt] (-1.6,1.2) -- (0,0);
\draw[col1, line width=1.2pt] (-1.414,-1.414) -- (0,0);
\draw[col2, line width=1.2pt] (0,0) -- (1.732,1.0);
\draw[col3, line width=1.2pt] (1.732,1.0) -- (2.932,-0.6);
\filldraw[black] (-1.6,1.2) circle (0.05);
\filldraw[black] (-1.414,-1.414) circle (0.05);
\filldraw[black] (0,0) circle (0.05);
\filldraw[black] (1.732,1.0) circle (0.05);
\filldraw[black] (2.932,-0.6) circle (0.05);
\filldraw[fill=red, draw=black, line width=0.4pt] (-0.8,0.6) circle (0.05);
\filldraw[fill=red, draw=black, line width=0.4pt] (-0.707,-0.707) circle (0.05);
\filldraw[fill=red, draw=black, line width=0.4pt] (0.866,0.5) circle (0.05);
\filldraw[fill=red, draw=black, line width=0.4pt] (2.332,0.2) circle (0.05);
\node[anchor=north east, inner sep=4pt, font=\scriptsize] at (-1.6,1.2) {${\bf c}_1$};
\node[anchor=south east, inner sep=4pt, font=\scriptsize] at (-1.414,-1.414) {${\bf c}_2$};
\node[anchor=north west, inner sep=4pt, font=\scriptsize] at (0,0) {${\bf c}_3$};
\node[anchor=south east, inner sep=4pt, font=\scriptsize] at (1.732,1.0) {${\bf c}_4$};
\node[anchor=north east, inner sep=4pt, font=\scriptsize] at (2.932,-0.6) {${\bf c}_5$};
\end{tikzpicture}
\caption{The cluster hull is the Minkowski sum
\(h({\bf c})=P({\bf c})\oplus D\), so
\(\Per({\bf c})=\Per(P({\bf c}))+2\pi\). The centre hull, realised contacts,
and contact graph are shown. Note the exterior angles of the centre hull sum to
\(2\pi\).}
\label{fig:minkowski}
\end{figure}

\begin{theorem}[Existence of minimisers]
\label{thm:existence-global-min}
For each fixed $n\ge 1$, the perimeter functional
\[
\Per:\mathcal D_n\to\R
\]
attains its minimum on $\mathcal D_n$.
\end{theorem}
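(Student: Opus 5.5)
The plan is a standard direct-method argument. We exploit translation invariance to reduce to a compact set and then invoke continuity of \(\Per\) from Remark~\ref{rem:smooth}. Since \(\mathcal D_n\) is nonempty (place the centres at \({\bf c}_i=(2i,0)\)), the infimum \(m=\inf_{\mathcal D_n}\Per\) is finite and nonnegative. Fix a reference configuration \({\bf c}^0\in\mathcal D_n\) and set \(M=\Per({\bf c}^0)\). It suffices to minimise over the sublevel set \(\{{\bf c}\in\mathcal D_n:\Per({\bf c})\le M\}\).

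The key step is an a priori diameter bound on this sublevel set. By Proposition~\ref{prop:minkowski-shift}, \(\Per(P({\bf c}))=\Per({\bf c})-2\pi\le M-2\pi\). For a planar convex set, the perimeter is at least twice the diameter: the boundary contains two arcs joining any two points of the set realising the diameter, and each arc has length at least the chord. Hence
\[
\|{\bf c}_i-{\bf c}_j\|\le \operatorname{diam}P({\bf c})\le \tfrac12\bigl(M-2\pi\bigr)
\]
for all \(i,j\). Since \(\Per\) is invariant under translations, and translations preserve \(\mathcal D_n\), we may normalise \({\bf c}_1=0\). The sublevel set then sits, up to translation, inside
\[
K=\Bigl\{{\bf c}\in\mathcal D_n:{\bf c}_1=0,\ \|{\bf c}_i\|\le \tfrac12(M-2\pi)\text{ for all }i\Bigr\}.
\]

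The set \(K\) is closed, being an intersection of \(\mathcal D_n\), which is closed as a finite intersection of closed sets \(\{\|{\bf c}_i-{\bf c}_j\|\ge2\}\), with closed conditions. It is also bounded, hence compact in \((\R^2)^n\). It is nonempty, since it contains the translate of \({\bf c}^0\). By Remark~\ref{rem:smooth}, \(\Per\) is continuous on \(\mathcal D_n\), so it attains a minimum on \(K\) at some \({\bf c}^*\).

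It remains to check that \({\bf c}^*\) is a global minimiser. Any \({\bf c}\in\mathcal D_n\) with \(\Per({\bf c})>M\) satisfies \(\Per({\bf c})>M\ge\Per({\bf c}^*)\). Any \({\bf c}\) with \(\Per({\bf c})\le M\) has a translate in \(K\) with the same perimeter, so \(\Per({\bf c})\ge\Per({\bf c}^*)\).

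The only step needing real care is the diameter bound, specifically the inequality \(\Per(P)\ge 2\operatorname{diam}P\). In the degenerate case where \(P\) is a segment, \(\Per(P)\) should be read as twice its length, which is consistent with the Steiner formula used in Proposition~\ref{prop:minkowski-shift}. For \(n=1\) the statement is trivial.
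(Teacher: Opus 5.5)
Your proposal is correct and follows essentially the same route as the paper. Both arguments normalise ${\bf c}_1={\bf 0}$ by translation, bound every centre through the inequality between perimeter and diameter, and conclude by compactness, closedness of $\mathcal D_n$, and continuity of $\Per$. The differences are cosmetic: you minimise over a compact sublevel set via the extreme value theorem instead of extracting a convergent subsequence from a minimising sequence, and you use the sharper bound $\Per(P)\ge 2\operatorname{diam}P$ where the paper uses $\operatorname{diam}P\le\Per(P)$.
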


\begin{proof}
Let $\{{\bf c}^m\}\subset\mathcal D_n$ be a minimizing sequence for $\Per$.
Using translation invariance, we may assume
\[
{\bf c}_1^m={\bf 0}
\qquad
\text{for all }m.
\]
After discarding finitely many terms, choose $L<\infty$ such that
\[
\Per({\bf c}^m)\le L
\qquad
\text{for all }m.
\]

The functional $\Per$ is continuous on $\mathcal D_n$. Indeed, the map
\[
{\bf c}\longmapsto P({\bf c})
=
\operatorname{conv}\{{\bf c}_1,\dots,{\bf c}_n\}
\]
is continuous with respect to the Hausdorff topology on compact convex sets,
and perimeter is continuous on planar convex bodies.

By Proposition~\ref{prop:minkowski-shift},
\[
\Per(P({\bf c}^m))
=
\Per({\bf c}^m)-2\pi
\le
L-2\pi.
\]

Since ${\bf c}_1^m={\bf 0}\in P({\bf c}^m)$, every centre ${\bf c}_i^m$ lies
in $P({\bf c}^m)$ and therefore satisfies
\[
\|{\bf c}_i^m\|
\le
\operatorname{diam}(P({\bf c}^m)).
\]

For any compact planar convex set $K$ one has
\[
\operatorname{diam}(K)\le \Per(K).
\] Hence
\[
\|{\bf c}_i^m\|
\le
\operatorname{diam}(P({\bf c}^m))
\le
\Per(P({\bf c}^m))
\le
L-2\pi
\qquad
\text{for all }i,m.
\]

Thus $\{{\bf c}^m\}$ is bounded in $(\R^2)^n$. Passing to a subsequence, we may
assume
\[
{\bf c}^m\to{\bf c}^*
\qquad
\text{in }(\R^2)^n.
\]
Since $\mathcal D_n$ is closed, ${\bf c}^*\in\mathcal D_n$. By continuity of
$\Per$,
\[
\Per({\bf c}^*)
=
\lim_{m\to\infty}\Per({\bf c}^m)
=
\inf_{{\bf c}\in\mathcal D_n}\Per({\bf c}).
\]
So ${\bf c}^*$ is a global minimiser.
\end{proof}

\section{Fixed hull contact classes and the rolling space}
\label{sec:fixed-class}

Given ${\bf c}\in\mathcal D_n$, its realised contact set is
\[
E({\bf c})=\{\{i,j\}: \|{\bf c}_i-{\bf c}_j\|=2\}.
\]

 The contact graph $G({\bf c})$ is the graph on $\{1,\dots,n\}$ whose edges
record these contacts. The contacts are determined by the active
inequalities of $\mathcal D_n$, while the graph is introduced afterwards as
derived data.

In the variational setting, the contact data are used together with the hull
data. On a fixed contact class, the active contacts become equality
constraints, while non contacts remain strict separation inequalities. The
hull data determine the local smooth expression for $\Per$, and the rolling
space records the contact preserving first order motions.

\begin{definition}
A configuration ${\bf c}\in\mathcal D_n$ is {\bf hull stable} if there exists an open
neighbourhood $U\subset(\R^2)^n$ of ${\bf c}$ such that for every
${\bf d}\in U\cap\mathcal D_n$ the centre hull $P({\bf d})$ has the same set of
hull vertex indices and the same cyclic order as $P({\bf c})$.
\end{definition}

\subsection{Contact classes, equality manifold and regularity}
For the purpose of describing a local contact class, fix a graph $G=(V,E)$ on
$\{1,\dots,n\}$. We write $E=E(G)$ for its edge set.
The contact class of $G$ is
\[
\mathcal C(G)=
\left\{
{\bf c}\in\mathcal D_n:
\|{\bf c}_i-{\bf c}_j\|=2 \ \forall \{i,j\}\in E(G),\quad
\|{\bf c}_i-{\bf c}_j\|>2 \ \forall \{i,j\}\notin E(G)
\right\}.
\]

A pairwise non overlap inequality is said to be active if it is satisfied with equality, equivalently if the corresponding pair of disks is in contact. Note the following distinction. $E(G)$ is the prescribed edge set of the abstract graph
$G$ used to describe the local class, while $E({\bf c})$ is the realised
contact set of a configuration ${\bf c}$. One has ${\bf c}\in\mathcal C(G)$ if
and only if $E({\bf c})=E(G)$.
Associated to $G$ is the equality manifold
\[
M(G)=
\left\{
{\bf d}\in(\R^2)^n:
\|{\bf d}_i-{\bf d}_j\|=2 \ \forall \{i,j\}\in E(G)
\right\}.
\]

For each edge \(\{i,j\}\in E(G)\), define
\[
g_{ij}({\bf d})=\|{\bf d}_j-{\bf d}_i\|^2-4.
\]
Define the constraint map
\[
F:(\R^2)^n\longrightarrow \R^{E(G)}
\]
by
\[
F({\bf d})
=
\bigl(g_{ij}({\bf d})\bigr)_{\{i,j\}\in E(G)}.
\]
Then
\[
M(G)=F^{-1}(0).
\]

We say that \({\bf d}\in M(G)\) is regular for \(G\) if the gradients
\[
\{\nabla g_{ij}({\bf d})\}_{\{i,j\}\in E(G)}
\]
are linearly independent in \(\R^{2n}\), equivalently if
\[
DF({\bf d}):(\R^2)^n\longrightarrow \R^{E(G)}
\]
has full row rank.

\begin{lemma}
\label{lem:manifold-rolling}
Let ${\bf c}\in M(G)$ be regular for $G$. Then $M(G)$ is a smooth embedded
submanifold of $(\R^2)^n$ near ${\bf c}$, of codimension $|E(G)|$.
\end{lemma}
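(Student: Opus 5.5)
The plan is to apply the regular value (submersion) form of the implicit function theorem to the constraint map $F:(\R^2)^n\to\R^{E(G)}$ introduced above.

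First, I would record that each $g_{ij}({\bf d})=\|{\bf d}_j-{\bf d}_i\|^2-4$ is a polynomial in the coordinates of ${\bf d}$, so $F$ is $C^\infty$ on all of $(\R^2)^n$. Its gradient has only two nonzero blocks: $\nabla g_{ij}({\bf d})$ equals $2({\bf d}_i-{\bf d}_j)$ in slot $i$ and $2({\bf d}_j-{\bf d}_i)$ in slot $j$, and vanishes elsewhere. Hence $DF({\bf d})$ is twice the rigidity matrix of the framework $(G,{\bf d})$.

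Second, I would use the hypothesis. Regularity at ${\bf c}$ says that $DF({\bf c})$ has full row rank $|E(G)|$. The set of matrices of full row rank is open, because some $|E(G)|\times|E(G)|$ minor is nonzero and minors depend continuously on ${\bf d}$. So there is an open neighbourhood $U$ of ${\bf c}$ on which $DF$ stays surjective, and $F|_U$ is a submersion.

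Third, I would choose coordinates. Pick $|E(G)|$ columns of $DF({\bf c})$ forming an invertible block, and split $(\R^2)^n\cong\R^{|E(G)|}\times\R^{2n-|E(G)|}$ accordingly. The implicit function theorem then shows that, after shrinking $U$, the set $F^{-1}(0)\cap U=M(G)\cap U$ is the graph of a smooth map from an open subset of $\R^{2n-|E(G)|}$ to $\R^{|E(G)|}$. A graph of this kind is a smooth embedded submanifold of dimension $2n-|E(G)|$, that is, of codimension $|E(G)|$. As a byproduct, its tangent space at ${\bf c}$ is $\ker DF({\bf c})$, which is the rolling space used later.

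There is no serious obstacle. The only point needing care is that regularity is assumed only at ${\bf c}$, so it must be extended to a neighbourhood by the openness of the full-rank condition. I would also note that $|E(G)|\le 2n$ follows automatically from full row rank, so the codimension count is consistent.
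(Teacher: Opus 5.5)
Your proof is correct and follows the paper's route: the paper likewise writes $M(G)=F^{-1}(0)$ and applies the regular value theorem, using that $DF({\bf c})$ has full row rank. You additionally make explicit two local details the paper leaves implicit: the full-rank condition persists on a neighbourhood of ${\bf c}$, and the implicit function theorem describes $M(G)$ near ${\bf c}$ as a graph.
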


\begin{proof}
Let $F=(g_{ij})_{\{i,j\}\in E(G)}$. Then $M(G)=F^{-1}(0)$. Since ${\bf c}$ is
regular for $G$, the gradients $\{\nabla g_{ij}({\bf c})\}_{\{i,j\}\in E(G)}$
are linearly independent, equivalently $DF({\bf c})$ has full row rank. By the
regular value theorem, $M(G)$ is a smooth embedded submanifold near ${\bf c}$,
of codimension $|E(G)|$.
\end{proof}

\begin{definition}
\label{def:fixed-hull-contact-class}
Let ${\bf c}\in\mathcal C(G)$ be hull stable and regular for $G$. Let
$\mathcal F=\mathcal F({\bf c})$ denote the local hull class determined by
${\bf c}$, namely the set of configurations whose centre hull has the same hull
vertex set and the same cyclic order as $P({\bf c})$. Let
$
\mathcal B=\mathcal B({\bf c})
$
denote the corresponding cyclic hull edge list.
A fixed hull and contact class near ${\bf c}$ is a set of the form
\[
X=\mathcal F\cap\mathcal C(G).
\]
\end{definition}

\begin{example}[Same contact graph, different hull cycles]
\label{ex:same-contact-different-hull}
A fixed hull and contact class is not determined by the contact graph alone. In Figure \ref{fig:same-contact-different-hull} we show two distinct realisations of the same five disk contact graph whose edges are
\[
E(G)=\{\{1,2\},\{2,3\},\{3,4\},\{3,5\},\{4,5\}\}
\]

\begin{figure}[ht]
\centering
\tikzset{diskhull/.style={black!100,line width=.5pt}}
\begin{tabular}{c@{\hspace{2.4cm}}c}
\begin{tikzpicture}[scale=0.62, line cap=round, line join=round]
\coordinate (C1) at (-4.000,0.000);
\coordinate (C2) at (-2.000,0.000);
\coordinate (C3) at (0.000,0.000);
\coordinate (C4) at (1.732,1.000);
\coordinate (C5) at (1.732,-1.000);
\draw[col0,line width=1.35pt] (C1) -- (C2);
\draw[col1,line width=1.35pt] (C2) -- (C3);
\draw[col2,line width=1.35pt] (C3) -- (C4);
\draw[col3,line width=1.35pt] (C4) -- (C5);
\draw[col4,line width=1.35pt] (C3) -- (C5);
\fill[black!17,opacity=0.25] (C1) circle (1.000);
\fill[black!17,opacity=0.25] (C2) circle (1.000);
\fill[black!17,opacity=0.25] (C3) circle (1.000);
\fill[black!17,opacity=0.25] (C4) circle (1.000);
\fill[black!17,opacity=0.25] (C5) circle (1.000);
\draw[gray,line width=0.85pt] (C1) circle (1.000);
\draw[gray,line width=0.85pt] (C2) circle (1.000);
\draw[gray,line width=0.85pt] (C3) circle (1.000);
\draw[gray,line width=0.85pt] (C4) circle (1.000);
\draw[gray,line width=0.85pt] (C5) circle (1.000);
\draw[diskhull]
  (-4.1719,0.9851)
  -- (1.5601,1.9851)
  arc[start angle=99.896,end angle=0,radius=1.000]
  -- (2.7320,-1.0000)
  arc[start angle=0,end angle=-99.896,radius=1.000]
  -- (-4.1719,-0.9851)
  arc[start angle=-99.896,end angle=-260.104,radius=1.000];
\filldraw[black] (C1) circle (0.05);
\filldraw[black] (C2) circle (0.05);
\filldraw[black] (C3) circle (0.05);
\filldraw[black] (C4) circle (0.05);
\filldraw[black] (C5) circle (0.05);
\node[font=\scriptsize] at (-4.000,-0.42) {$1$};
\node[font=\scriptsize] at (-2.000,-0.42) {$2$};
\node[font=\scriptsize] at (0.000,-0.42) {$3$};
\node[font=\scriptsize] at (1.932,0.58) {$4$};
\node[font=\scriptsize] at (1.932,-1.42) {$5$};
\end{tikzpicture}
&
\begin{tikzpicture}[scale=0.62, line cap=round, line join=round]
\coordinate (C1) at (-2.000,2.000);
\coordinate (C2) at (-2.000,0.000);
\coordinate (C3) at (0.000,0.000);
\coordinate (C4) at (1.732,1.000);
\coordinate (C5) at (1.732,-1.000);
\draw[col0,line width=1.35pt] (C1) -- (C2);
\draw[col1,line width=1.35pt] (C2) -- (C3);
\draw[col2,line width=1.35pt] (C3) -- (C4);
\draw[col3,line width=1.35pt] (C4) -- (C5);
\draw[col4,line width=1.35pt] (C3) -- (C5);
\fill[black!17,opacity=0.25] (C1) circle (1.000);
\fill[black!17,opacity=0.25] (C2) circle (1.000);
\fill[black!17,opacity=0.25] (C3) circle (1.000);
\fill[black!17,opacity=0.25] (C4) circle (1.000);
\fill[black!17,opacity=0.25] (C5) circle (1.000);
\draw[gray,line width=0.85pt] (C1) circle (1.000);
\draw[gray,line width=0.85pt] (C2) circle (1.000);
\draw[gray,line width=0.85pt] (C3) circle (1.000);
\draw[gray,line width=0.85pt] (C4) circle (1.000);
\draw[gray,line width=0.85pt] (C5) circle (1.000);
\draw[diskhull]
  (-1.7412,2.9659)
  -- (1.9908,1.9659)
  arc[start angle=75.000,end angle=0,radius=1.000]
  -- (2.7320,-1.0000)
  arc[start angle=0,end angle=-105.000,radius=1.000]
  -- (-2.2588,-0.9659)
  arc[start angle=-105.000,end angle=-180.000,radius=1.000]
  -- (-3.0000,2.0000)
  arc[start angle=180.000,end angle=75.000,radius=1.000];
\filldraw[black] (C1) circle (0.05);
\filldraw[black] (C2) circle (0.05);
\filldraw[black] (C3) circle (0.05);
\filldraw[black] (C4) circle (0.05);
\filldraw[black] (C5) circle (0.05);
\node[font=\scriptsize] at (-2.000,2.42) {$1$};
\node[font=\scriptsize] at (-2.000,-0.42) {$2$};
\node[font=\scriptsize] at (0.000,-0.42) {$3$};
\node[font=\scriptsize] at (1.932,0.58) {$4$};
\node[font=\scriptsize] at (1.932,-1.42) {$5$};
\end{tikzpicture}
\end{tabular}
\caption{Realisations of the same contact graph with different cyclic hull
data: left, \((15)(54)(41)\); right, \((12)(25)(54)(41)\). Contact graph
edges are unordered, while cyclic hull data are recorded as ordered hull edges,
up to cyclic shift and reversal.}
\label{fig:same-contact-different-hull}
\end{figure}
The fixed hull condition is needed, since it determines the local perimeter formula.
\end{example}

\subsection{Prescribed lengths versus realised contacts}
\label{intrinsic}
Under bar--joint rigidity one starts with an abstract graph and prescribes the
edge length constraints. For an edge \(\{i,j\}\), the fixed length equation is
\[
\|{\bf p}_i-{\bf p}_j\|^2=\ell_{ij}^2.
\]
Differentiating gives
\[
\langle {\bf p}_i-{\bf p}_j,\delta{\bf p}_i-\delta{\bf p}_j\rangle=0,
\]
which describes infinitesimal motions preserving the prescribed edge length.

The hard disk setting has a different order of construction. The ambient
configuration space is defined first by the non overlap inequalities
\[
\|{\bf c}_i-{\bf c}_j\|\ge 2.
\]
A contact is a realised equality in this system,
\[
\|{\bf c}_i-{\bf c}_j\|=2,
\]
so an edge records an existing tangency rather than a primitive prescribed bar.

At a realised contact \(\{i,j\}\), the first order non overlap condition is
\begin{equation}
\label{eq:first-order-nonoverlap}
\langle {\bf u}_{ij},\delta{\bf c}_j-\delta{\bf c}_i\rangle\ge 0,
\qquad
{\bf u}_{ij}
=
\frac{{\bf c}_j-{\bf c}_i}{\|{\bf c}_j-{\bf c}_i\|}.
\end{equation}

Geometrically, the inner product measures the normal component of the relative
velocity of the two centres at the contact, see Figure~\ref{fig:contact}. A
positive value opens the contact, zero preserves it, and a negative value
violates the non overlap constraint. Thus \eqref{eq:first-order-nonoverlap} describes first order admissibility in
the ambient hard disk configuration space, while equality in
\eqref{eq:first-order-nonoverlap} describes contact preservation to first
order.

\begin{figure}[htbp]
\centering
\begin{tikzpicture}[scale=1.5, line cap=round, line join=round]
\fill[black!17, opacity=0.45] (-1,0) circle (1.000);
\draw[gray, line width=0.8pt] (-1,0) circle (1.000);
\fill[black!17, opacity=0.45] (1,0) circle (1.000);
\draw[gray, line width=0.8pt] (1,0) circle (1.000);
\draw[-stealth, col0, line width=1.2pt] (-1,0) -- (0,0);
\draw[col0, line width=1.2pt] (0,0) -- (1,0);
\node[anchor=north, inner sep=4pt, font=\scriptsize] at (-0.5,0) {${\bf u}_{ij}$};
\draw[-stealth, col0, line width=1.2pt] (0,0) -- (0,1);
\node[anchor=south, inner sep=2pt, font=\scriptsize] at (0,1.1) {${\bf u}_{ij}^{\perp}$};
\draw[-stealth, black!80, line width=0.8pt]
  (-1,0) -- (-0.6,0.4)
  node[midway, above left, inner sep=1pt, font=\scriptsize]
  {$\delta{\bf c}_i$};
\draw[-stealth, black!80, line width=0.8pt]
  (1,0) -- (1.25,0.7)
  node[midway, above left, inner sep=1pt, font=\scriptsize]
  {$\delta{\bf c}_j$};
\filldraw[black] (-1,0) circle (0.038);
\filldraw[black] (1,0) circle (0.038);
\filldraw[fill=red, draw=black, line width=0.4pt] (0,0) circle (0.038);
\node[anchor=north east, inner sep=4pt, font=\scriptsize] at (-1,0) {${\bf c}_i$};
\node[anchor=north east, inner sep=4pt, font=\scriptsize] at (1,0) {${\bf c}_j$};
\end{tikzpicture}
\caption{Geometry at a realised contact, showing the normal direction
\({\bf u}_{ij}\), the tangential direction \({\bf u}_{ij}^{\perp}\), and
perturbations of the centres. The first order non overlap condition is
\(\langle {\bf u}_{ij},\delta{\bf c}_j-\delta{\bf c}_i\rangle\ge 0\), with
equality corresponding to contact preservation.}
\label{fig:contact}
\end{figure}

\subsection{First order admissibility and the rolling space}
\label{subsec:admissible-rolling}

Let \({\bf c}\in\mathcal D_n\). The first order admissible cone at
\({\bf c}\) is
\[
\Adm({\bf c})
=
\left\{
\delta{\bf c}\in(\R^2)^n:
\langle {\bf u}_{ij},\delta{\bf c}_j-\delta{\bf c}_i\rangle\ge0
\ \text{for all } \{i,j\}\in E({\bf c})
\right\},
\]
where \({\bf u}_{ij}\) is defined by
\eqref{eq:first-order-nonoverlap}. This cone records the full first order
feasibility relation coming from the non overlap inequalities defining
\(\mathcal D_n\). A direction in \(\Adm({\bf c})\) may either preserve
or open a contact to first order. 

Suppose that \({\bf c}\in\mathcal C(G)\). Since
\(E({\bf c})=E(G)\), one has
\[
{\bf u}_{ij}
=
\frac{{\bf c}_j-{\bf c}_i}{2}
\qquad
\text{for all } \{i,j\}\in E(G).
\]

The rolling space records the contact preserving equality part of \(\mathsf{Adm}({\bf c})\). It is the linear subspace obtained by replacing the
inequalities of the form \eqref{eq:first-order-nonoverlap} at the active
contacts by equalities.

\begin{definition}
The rolling space at \({\bf c}\) is
\[
\Roll({\bf c})=
\left\{
\delta{\bf c}\in(\R^2)^n:
\langle {\bf u}_{ij},\delta{\bf c}_j-\delta{\bf c}_i\rangle=0
\ \text{for all } \{i,j\}\in E(G)
\right\}.
\]
\end{definition}

Thus
\[
\Roll({\bf c})\subset \Adm({\bf c}).
\]
The inclusion is generally strict, first order admissible directions may open
one or more contacts, whereas rolling directions preserve all contacts of the
fixed contact class to first order.

\subsection{The contact operator}
The contact operator plays the role of the rigidity matrix for the realised contact geometry of a hard disk configuration. The crucial difference is that it does not define the contact geometry, it records it in coordinates. The terminology is chosen to emphasise
a distinction from bar--joint rigidity. In the latter, the graph is prescribed
in advance and its edges impose fixed length constraints, whereas here the
active edges are exactly the tangencies realised by the current configuration,
while non edges remain strict separation inequalities. For a realised
configuration ${\bf c}\in\mathcal C(G)$, the contact operator is
\[
A({\bf c}) : (\R^2)^n \to \R^{E(G)}.
\]
It is the linear map defined by
\[
\bigl(A({\bf c})\,\delta{\bf c}\bigr)_{ij}
=
\langle {\bf u}_{ij},\delta{\bf c}_j-\delta{\bf c}_i\rangle
\qquad
\text{for } \{i,j\}\in E(G).
\]
Here $E(G)$ is the fixed edge set of the graph $G$ defining the class under
discussion. In particular,
\[
\Roll({\bf c})=\ker A({\bf c}).
\]
Regularity of ${\bf c}$ for $G$ is equivalent to $A({\bf c})$ having full row
rank.
\begin{lemma}
\label{lem:tangent-identification}
Let ${\bf c}\in \mathcal C(G)$ be regular for $G$. Then
\[
T_{\bf c}M(G)=\Roll({\bf c}).
\]
\end{lemma}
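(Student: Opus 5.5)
The plan is to identify both sides as the kernel of the same linear map, up to a nonzero scaling of each row. By Lemma~\ref{lem:manifold-rolling}, regularity of \({\bf c}\) for \(G\) makes \(M(G)=F^{-1}(0)\) a smooth embedded submanifold near \({\bf c}\). The regular value theorem also gives the standard description of its tangent space,
\[
T_{\bf c}M(G)=\ker DF({\bf c}),
\]
which is a subspace of dimension \(2n-|E(G)|\). So it remains to compare \(\ker DF({\bf c})\) with \(\ker A({\bf c})=\Roll({\bf c})\).

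To do this I will compute \(DF({\bf c})\) row by row. For \(\{i,j\}\in E(G)\), differentiating \(g_{ij}({\bf d})=\|{\bf d}_j-{\bf d}_i\|^2-4\) at \({\bf c}\) in the direction \(\delta{\bf c}\) gives
\[
Dg_{ij}({\bf c})[\delta{\bf c}]=2\langle {\bf c}_j-{\bf c}_i,\delta{\bf c}_j-\delta{\bf c}_i\rangle .
\]
Since \({\bf c}\in\mathcal C(G)\), we have \(\|{\bf c}_j-{\bf c}_i\|=2\), so \({\bf c}_j-{\bf c}_i=2{\bf u}_{ij}\). Substituting,
\[
Dg_{ij}({\bf c})[\delta{\bf c}]=4\langle {\bf u}_{ij},\delta{\bf c}_j-\delta{\bf c}_i\rangle=4\,\bigl(A({\bf c})\delta{\bf c}\bigr)_{ij}.
\]
Hence \(DF({\bf c})=4A({\bf c})\), and the two maps have the same kernel. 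This gives \(T_{\bf c}M(G)=\Roll({\bf c})\).

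The same identity also justifies the remark that regularity is equivalent to \(A({\bf c})\) having full row rank. It follows that \(\dim\Roll({\bf c})=2n-|E(G)|\).

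There is no real obstacle. The only points needing care are these. First, the tangent space must be taken as \(\ker DF\), which is legitimate only because \(DF({\bf c})\) is surjective; this is exactly the regularity hypothesis, used through Lemma~\ref{lem:manifold-rolling}. Second, the normalisation \({\bf u}_{ij}=({\bf c}_j-{\bf c}_i)/2\) uses the fact that every edge of \(G\) is a realised contact, that is, \(E({\bf c})=E(G)\). Without the constant \(2\) one would still get the conclusion, since each row is only rescaled by a positive factor.
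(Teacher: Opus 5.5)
Your proposal is correct and follows essentially the same route as the paper: identify \(T_{\bf c}M(G)=\ker DF({\bf c})\) via the regular value theorem (Lemma~\ref{lem:manifold-rolling}), then compute \(Dg_{ij}({\bf c})[\delta{\bf c}]=4\langle {\bf u}_{ij},\delta{\bf c}_j-\delta{\bf c}_i\rangle\) using \(\|{\bf c}_j-{\bf c}_i\|=2\). This gives \(DF({\bf c})=4A({\bf c})\) and hence equal kernels.
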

\begin{proof}
By Lemma~\ref{lem:manifold-rolling},
\[
T_{\bf c}M(G)=\ker DF({\bf c}).
\]
For any variation $\delta{\bf c}$ and any active edge $\{i,j\}$,
\[
Dg_{ij}({\bf c})[\delta{\bf c}]
=
2\langle {\bf c}_j-{\bf c}_i,\delta{\bf c}_j-\delta{\bf c}_i\rangle
=
4\langle {\bf u}_{ij},\delta{\bf c}_j-\delta{\bf c}_i\rangle.
\]
Thus
\[
\ker DF({\bf c})=\ker A({\bf c})=\Roll({\bf c}).
\]
\end{proof}

\subsection{Interior points and rigid motions}
The identification \(T_{\bf c}M(G)=\Roll({\bf c})\) becomes the tangent space of the fixed hull and contact class when no new contact or hull change occurs
nearby.

\begin{remark}[Relative interior]
\label{rem:interior-class}
Let $X=\mathcal F\cap\mathcal C(G)$ be a fixed hull and contact class, and let
${\bf c}\in X$ be regular for $G$. Suppose that, along $M(G)$ near ${\bf c}$,
no non edge comes into contact. Thus there exists a neighbourhood $U_0$ of
${\bf c}$ in $(\R^2)^n$ such that
\[
\|{\bf d}_i-{\bf d}_j\|>2
\qquad
\text{for all } {\bf d}\in U_0\cap M(G)
\text{ and all } \{i,j\}\notin E(G).
\]
After shrinking $U_0$ if necessary, assume also that
\[
U_0\cap\mathcal D_n\subset\mathcal F.
\]
Then
\[
X\cap U_0=M(G)\cap U_0.
\]
In particular, ${\bf c}$ is an interior point of $X$ relative to $M(G)$, and by
Lemma~\ref{lem:tangent-identification},
\[
T_{\bf c}X=T_{\bf c}M(G)=\Roll({\bf c}).
\]
\end{remark}

For a fixed hull class $\mathcal F$ with cyclic hull edge list $\mathcal B$, set
\[
\widetilde\Per({\bf d})
=
\sum_{(i,j)\in\mathcal B}\|{\bf d}_j-{\bf d}_i\|+2\pi.
\]
Then $\widetilde\Per=\Per$ on $\mathcal F$, and hence on $X$.

\begin{proposition}[First order class criticality]
\label{prop:first order-criticality}
Let ${\bf c}$ be a relative interior regular point of a fixed
hull and contact class
\[
X=\mathcal F\cap\mathcal C(G).
\]
Then the following are equivalent:

\begin{itemize}
    \item ${\bf c}$ is class critical, that is,
\[
D\Per({\bf c})[\delta{\bf c}]=0
\qquad
\text{for every }\delta{\bf c}\in\Roll({\bf c});
\]
\item
\[
\nabla\widetilde\Per({\bf c})\perp\Roll({\bf c});
\]
\item there exists a multiplier vector
\[
\lambda=(\lambda_{ij})_{\{i,j\}\in E(G)}
\]
such that
\[
\nabla\widetilde\Per({\bf c})
=
A({\bf c})^\top\lambda .
\]
Equivalently, using the normalised contact constraints
\[
h_{ij}({\bf d})=\|{\bf d}_j-{\bf d}_i\|-2,
\]
one has
\[
\nabla\widetilde\Per({\bf c})
=
\sum_{\{i,j\}\in E(G)}
\lambda_{ij}\nabla h_{ij}({\bf c}).
\]
\end{itemize}
\end{proposition}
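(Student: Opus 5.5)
The equivalences are linear algebra once the local smooth structure is in place, so the plan is to chain them as (i)$\Leftrightarrow$(ii)$\Leftrightarrow$(iii), with the normalised form of (iii) as a final rescaling.

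For (i)$\Leftrightarrow$(ii), use the hypotheses. Since ${\bf c}$ is a relative interior regular point, Remark~\ref{rem:interior-class} gives a neighbourhood $U_0$ with $U_0\cap\mathcal D_n\subset\mathcal F$. On $\mathcal F$ we have $\Per=\widetilde\Per$, and $\widetilde\Per$ is smooth near ${\bf c}$ by Remark~\ref{rem:smooth}; smoothness needs the hull edges to have positive length, which holds because distinct centres are at distance at least $2$. The restriction of $\Per$ to $X\cap U_0=M(G)\cap U_0$ is therefore a smooth function on a manifold. Its differential at ${\bf c}$, evaluated on $\delta{\bf c}\in T_{\bf c}X=\Roll({\bf c})$, is
\[
D\Per({\bf c})[\delta{\bf c}]=\langle\nabla\widetilde\Per({\bf c}),\delta{\bf c}\rangle .
\]
To check this, realise $\delta{\bf c}$ as the velocity of a curve in $M(G)\cap U_0$ and differentiate $\widetilde\Per$ along that curve. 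Vanishing for all rolling directions is then exactly orthogonality to $\Roll({\bf c})$.

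For (ii)$\Leftrightarrow$(iii), use $\Roll({\bf c})=\ker A({\bf c})$ and the finite-dimensional identity $(\ker A)^\perp=\operatorname{im}A^\top$. Hence $\nabla\widetilde\Per({\bf c})\perp\Roll({\bf c})$ if and only if $\nabla\widetilde\Per({\bf c})=A({\bf c})^\top\lambda$ for some $\lambda\in\R^{E(G)}$. Regularity means $A({\bf c})$ has full row rank, so $A({\bf c})^\top$ is injective and $\lambda$ is unique. Uniqueness is worth noting even though the statement does not require it.

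For the normalised form, compute $\nabla h_{ij}$ at ${\bf c}$. The ${\bf d}_j$-component is $({\bf c}_j-{\bf c}_i)/\|{\bf c}_j-{\bf c}_i\|={\bf u}_{ij}$, the ${\bf d}_i$-component is $-{\bf u}_{ij}$, and all other components vanish. This vector is precisely the row of $A({\bf c})$ indexed by $\{i,j\}$, so $A({\bf c})^\top\lambda=\sum_{\{i,j\}\in E(G)}\lambda_{ij}\nabla h_{ij}({\bf c})$ with the same multipliers. By contrast, using $g_{ij}$ would rescale each multiplier by a factor of $4$, as in Lemma~\ref{lem:tangent-identification}. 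The orientation of ${\bf u}_{ij}$ is irrelevant, since swapping $i$ and $j$ flips both the row and the sign convention consistently.

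The only step that needs real care is the first one. One must justify that $D\Per({\bf c})$ restricted to $\Roll({\bf c})$ makes sense and equals $\langle\nabla\widetilde\Per({\bf c}),\cdot\rangle$, even though $\Per$ is not globally smooth. This is exactly where hull stability and relative interiority are used: they ensure $\Per$ coincides with $\widetilde\Per$ on a full neighbourhood of ${\bf c}$ in $X$, and that every rolling vector is tangent to a curve staying in $X$.
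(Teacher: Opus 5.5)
Your proposal is correct and follows the paper's proof essentially step for step: identify $T_{\bf c}X=\Roll({\bf c})=\ker A({\bf c})$ via Remark~\ref{rem:interior-class}, replace $\Per$ by $\widetilde\Per$ on the hull stable neighbourhood, apply $(\ker A({\bf c}))^\perp=\operatorname{im}A({\bf c})^\top$, and observe that the rows of $A({\bf c})$ are exactly the gradients $\nabla h_{ij}({\bf c})$. Two of your additions are correct refinements that the paper leaves implicit: uniqueness of $\lambda$ from full row rank, and the curve argument justifying $D\Per({\bf c})[\delta{\bf c}]=\langle\nabla\widetilde\Per({\bf c}),\delta{\bf c}\rangle$ on $\Roll({\bf c})$.
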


\begin{proof}
Since ${\bf c}$ is a relative interior regular point of \(X\),
Remark~\ref{rem:interior-class} gives
\[
T_{\bf c}X=\Roll({\bf c})=\ker A({\bf c}).
\]
On a hull stable neighbourhood, \(\widetilde\Per=\Per\) on \(X\). Hence
\({\bf c}\) is class critical exactly when
\[
D\widetilde\Per({\bf c})[\delta{\bf c}]=0
\qquad
\text{for every }\delta{\bf c}\in\Roll({\bf c}).
\]

With respect to the standard inner product on \((\R^2)^n\), this is equivalent
to
\[
\nabla\widetilde\Per({\bf c})\perp\Roll({\bf c}).
\]
Since
\[
\Roll({\bf c})=\ker A({\bf c}),
\]
the finite-dimensional identity
\[
(\ker A({\bf c}))^\perp=\operatorname{im} A({\bf c})^\top
\]
shows that this is equivalent to the existence of
\(\lambda\in\R^{E(G)}\) such that
\[
\nabla\widetilde\Per({\bf c})=A({\bf c})^\top\lambda.
\]
Finally, the row of \(A({\bf c})\) indexed by \(\{i,j\}\) is precisely the
gradient of the normalised contact constraint
\[
h_{ij}({\bf d})=\|{\bf d}_j-{\bf d}_i\|-2
\]
at \({\bf c}\). Therefore
\[
A({\bf c})^\top\lambda
=
\sum_{\{i,j\}\in E(G)}
\lambda_{ij}\nabla h_{ij}({\bf c}),
\]
which gives the stated equivalent formulation.
\end{proof}

Since \(\Per\) is invariant under rigid motions, we quotient out the standard
infinitesimal rigid motion space
\[
R({\bf c})
=
\left\{
({\bf a}+\omega J{\bf c}_1,\dots,{\bf a}+\omega J{\bf c}_n):
{\bf a}\in\R^2,\ \omega\in\R
\right\},
\]
where \(J(x,y)=(-y,x)\).

\begin{lemma}
\label{lem:R-in-Roll}
For every ${\bf c}\in\mathcal C(G)$ one has $R({\bf c})\subset\Roll({\bf c})$.
\end{lemma}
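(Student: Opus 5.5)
The plan is to verify directly that every element of \(R({\bf c})\) satisfies each defining equation of \(\Roll({\bf c})\). Since \(\Roll({\bf c})=\ker A({\bf c})\) is a linear subspace and \(R({\bf c})\) is the image of the linear map \(({\bf a},\omega)\mapsto({\bf a}+\omega J{\bf c}_1,\dots,{\bf a}+\omega J{\bf c}_n)\), it suffices to treat a general element. Equivalently, by linearity, we may check a pure translation (\(\omega=0\)) and the infinitesimal rotation (\({\bf a}={\bf 0}\), \(\omega=1\)) separately.

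Fix \(\delta{\bf c}\in R({\bf c})\) with parameters \({\bf a},\omega\), and an edge \(\{i,j\}\in E(G)\). Since \({\bf c}\in\mathcal C(G)\), we have \({\bf u}_{ij}=({\bf c}_j-{\bf c}_i)/2\). The relative velocity is
\[
\delta{\bf c}_j-\delta{\bf c}_i=\omega J({\bf c}_j-{\bf c}_i),
\]
because the translation term \({\bf a}\) cancels. Hence
\[
\langle {\bf u}_{ij},\delta{\bf c}_j-\delta{\bf c}_i\rangle=\tfrac{\omega}{2}\langle {\bf c}_j-{\bf c}_i,J({\bf c}_j-{\bf c}_i)\rangle .
\]
This inner product vanishes because \(J\) is rotation by \(\pi/2\), so \(\langle {\bf v},J{\bf v}\rangle=-v_1v_2+v_2v_1=0\) for every \({\bf v}\in\R^2\). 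Since the edge was arbitrary, \(A({\bf c})\delta{\bf c}=0\), that is, \(\delta{\bf c}\in\Roll({\bf c})\).

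There is no real obstacle here. The only point needing care is that the conclusion uses only the skew-symmetry of \(J\) and the cancellation of \({\bf a}\); neither regularity nor hull stability is needed. This is consistent with the statement, which asks only for \({\bf c}\in\mathcal C(G)\). The identity \({\bf u}_{ij}=({\bf c}_j-{\bf c}_i)/2\) is not even essential, since any positive multiple of \({\bf c}_j-{\bf c}_i\) gives the same vanishing.

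As a remark for later use, one may also note that \(\dim R({\bf c})=3\) whenever at least two centres are distinct, which always holds for \(n\ge2\) in \(\mathcal D_n\). This justifies defining the reduced rolling space as the quotient \(\Roll({\bf c})/R({\bf c})\), or as its orthogonal complement inside \(\Roll({\bf c})\).
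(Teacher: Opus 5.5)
Your proof is correct and follows the paper's argument: the translation cancels in $\delta{\bf c}_j-\delta{\bf c}_i$, and the rotation term vanishes because $\langle {\bf w},J{\bf w}\rangle=0$. The only difference is cosmetic. The paper treats pure translations and infinitesimal rotations separately, while you handle a general element of $R({\bf c})$ in one computation.
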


\begin{proof}
A pure translation $\delta{\bf c}_i={\bf a}$ for all $i$ satisfies
$
\delta{\bf c}_j-\delta{\bf c}_i={\bf 0},
$
hence all rolling constraints are trivially satisfied. For the infinitesimal
rotation $\delta{\bf c}_i=\omega J{\bf c}_i$, one has
\[
\delta{\bf c}_j-\delta{\bf c}_i
=
\omega J({\bf c}_j-{\bf c}_i).
\]
Since ${\bf u}_{ij}=({\bf c}_j-{\bf c}_i)/2$, it follows that
\[
\langle {\bf u}_{ij},\omega J({\bf c}_j-{\bf c}_i)\rangle
=
\frac{\omega}{2}\langle {\bf c}_j-{\bf c}_i,J({\bf c}_j-{\bf c}_i)\rangle
=0,
\]
because $\langle {\bf w},J{\bf w}\rangle=0$ for every ${\bf w}\in\R^2$.
\end{proof}

\subsection{Leaving a class and incident classes}
Let \(X=\mathcal F\cap\mathcal C(G)\), and let
\({\bf c}^\ast\in X\) be a relative interior point of the class. A \(C^1\) path
\({\bf c}(t)\) in \(\mathcal D_n\) with \({\bf c}(0)={\bf c}^\ast\) either
remains in \(X\) for all sufficiently small \(t>0\), or exits through at least
one of three mechanisms: an active contact opens, a new contact forms, or the
hull data change.
\begin{enumerate}
\item {\bf Edge deletion:}
      \(\|{\bf c}_i(t)-{\bf c}_j(t)\|>2\) for some \(\{i,j\}\in E(G)\);
\item {\bf Edge addition:}
      \(\|{\bf c}_i(t)-{\bf c}_j(t)\|=2\) for some \(\{i,j\}\notin E(G)\);
\item {\bf Hull change:} the path exits \(\mathcal F\).
\end{enumerate}
These mechanisms are not mutually exclusive. 

\begin{remark}[Closure convention]
\label{rem:closure}
All closures of sets of the form $\mathcal F\cap\mathcal C(G')$ are taken in
the ambient space $(\R^2)^n$. Boundary configurations are treated through their
incident fixed classes.
\end{remark}

\begin{lemma}[One-sided reduction to incident fixed classes]
\label{lem:descent-adjacent}
Let ${\bf c}\in\mathcal D_n$. Let
\[
\mathcal A({\bf c})
=
\{X_\alpha=\mathcal F_\alpha\cap\mathcal C(G_\alpha):
{\bf c}\in\overline{X_\alpha}\}
\]
be the finite collection of fixed hull and contact classes whose closures
contain ${\bf c}$, where each class is determined by an exact realised contact
graph and a cyclic hull datum. Suppose that there exists $\rho>0$ such that
\[
\Per({\bf d})\ge \Per({\bf c})
\qquad
\text{for every }X_\alpha\in\mathcal A({\bf c})
\text{ and every }{\bf d}\in\overline{X_\alpha}\cap B_\rho({\bf c}),
\]
where \(B_\rho({\bf c})\) denotes the open ball in \((\R^2)^n\). Then
${\bf c}$ is a local minimiser of $\Per$ in $\mathcal D_n$.
\end{lemma}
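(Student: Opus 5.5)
The argument is a covering argument: a small ball around \({\bf c}\) intersected with \(\mathcal D_n\) is contained in the union of the closures of the classes in \(\mathcal A({\bf c})\). Once this is shown, the hypothesis gives \(\Per({\bf d})\ge\Per({\bf c})\) on that neighbourhood, which is exactly local minimality. The plan therefore has two parts. First, show that \(\mathcal D_n\) is partitioned into fixed hull and contact classes, with each class determined by the exact realised contact graph together with a cyclic hull datum. Second, show that only finitely many of these classes come near \({\bf c}\), and that every class coming arbitrarily close to \({\bf c}\) has \({\bf c}\) in its closure.

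\textbf{Step 1 (partition).} Every \({\bf d}\in\mathcal D_n\) has a realised contact set \(E({\bf d})\), so \({\bf d}\in\mathcal C(G_{\bf d})\) with \(G_{\bf d}=(\{1,\dots,n\},E({\bf d}))\). The point \({\bf d}\) also has a well defined hull vertex set and cyclic order, so it lies in exactly one set \(\mathcal F_{\bf d}\cap\mathcal C(G_{\bf d})\). Here I read the classes of the statement, as the statement itself indicates, as exact combinatorial strata labelled by (contact graph, cyclic hull datum). This avoids requiring hull stability at every point. The number of such labels is finite: there are at most \(2^{\binom n2}\) graphs and finitely many cyclic orders on subsets of \(\{1,\dots,n\}\). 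Hence \(\mathcal D_n=\bigcup_\beta Y_\beta\) is a finite union of strata \(Y_\beta\).

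\textbf{Step 2 (local finiteness and closure).} Take the finitely many strata \(Y_\beta\) with \({\bf c}\notin\overline{Y_\beta}\). Each has positive distance \(\operatorname{dist}({\bf c},\overline{Y_\beta})>0\), since the closure is closed and there are finitely many of them. Let \(\rho'\) be smaller than all these distances and than \(\rho\). Then every \({\bf d}\in B_{\rho'}({\bf c})\cap\mathcal D_n\) lies in a stratum \(Y_\beta\) with \({\bf c}\in\overline{Y_\beta}\), that is, \(Y_\beta=X_\alpha\in\mathcal A({\bf c})\). The hypothesis applied with \({\bf d}\in X_\alpha\subset\overline{X_\alpha}\) and \({\bf d}\in B_\rho({\bf c})\) gives \(\Per({\bf d})\ge\Per({\bf c})\). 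Hence \({\bf c}\) minimises \(\Per\) on \(B_{\rho'}({\bf c})\cap\mathcal D_n\).

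\textbf{Main obstacle.} The delicate point is the bookkeeping in Step 1, namely making sure the strata really cover \(\mathcal D_n\). Definition~\ref{def:fixed-hull-contact-class} introduces classes only near hull stable regular points, so one must check that the collection \(\mathcal A({\bf c})\) in the statement is understood to contain every exact (graph, hull datum) stratum accumulating at \({\bf c}\), including strata of degenerate or non-regular points. The statement's phrase ``determined by an exact realised contact graph and a cyclic hull datum'' fixes this reading. I would state the convention explicitly, including how collinear boundary points are handled: a fixed rule, for example counting only strict extreme points as hull vertices, makes the hull datum well defined for every configuration. Under this convention the finiteness-plus-positive-distance argument above is routine. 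Continuity of \(\Per\) (Remark~\ref{rem:smooth}) is not even needed, since the hypothesis already covers the closures.
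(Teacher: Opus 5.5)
Your proof is correct and is essentially the paper's argument. The paper argues by contradiction: it pigeonholes a strictly descending sequence ${\bf c}^k\to{\bf c}$ into one of the finitely many labelled classes, and then notes that ${\bf c}$ lies in that class's closure. You state the same finiteness argument directly, by discarding the finitely many non-incident classes with a positive distance bound. Your explicit convention that the exact (contact graph, cyclic hull datum) strata cover $\mathcal D_n$ is the covering fact the paper uses implicitly when it places each ${\bf c}^k$ in some $\overline{X_\alpha}$.
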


\begin{proof}
Assume that ${\bf c}$ is not a local minimiser. Then there exists a sequence
${\bf c}^k\to{\bf c}$ in $\mathcal D_n$ such that
\[
\Per({\bf c}^k)<\Per({\bf c})
\qquad
\text{for all }k.
\]

There are only finitely many contact graphs on $\{1,\ldots,n\}$ and only
finitely many cyclic hull data on this labelled vertex set. Hence, after
passing to a subsequence, there is a fixed hull and contact class
\(X_\alpha\) such that
\[
{\bf c}^k\in\overline{X_\alpha}
\qquad
\text{for all }k.
\]
Since ${\bf c}^k\to{\bf c}$, it follows that ${\bf c}\in\overline{X_\alpha}$.
Thus \(X_\alpha\in\mathcal A({\bf c})\). For \(k\) sufficiently large,
\[
{\bf c}^k\in\overline{X_\alpha}\cap B_\rho({\bf c}).
\]
The assumed one-sided inequality on the incident class gives
\[
\Per({\bf c}^k)\ge \Per({\bf c}),
\]
contradicting the strict descent assumption. Therefore no strictly descending
sequence exists, and ${\bf c}$ is a local minimiser in $\mathcal D_n$.
\end{proof}

\section{First order obstruction: hull leaves}
\label{sec:obstructions}
Throughout this section we let ${\bf c}\in X=\mathcal F\cap\mathcal C(G)$ be a relative interior regular point of a fixed hull and contact class.

\begin{proposition}[Direct first order exclusion]
\label{prop:direct-first order-exclusion}
Let ${\bf c}$ be a relative interior regular point of a fixed hull and contact
class \(X=\mathcal F\cap\mathcal C(G)\). If there exists
$\delta{\bf c}\in \Roll({\bf c})$ such that
$D\Per({\bf c})[\delta{\bf c}]<0$, then ${\bf c}$ is not class critical. In
particular, ${\bf c}$ is not a local minimiser of $\Per$ on $X$.
\end{proposition}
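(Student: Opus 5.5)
The first assertion follows immediately from the definition of class criticality in Proposition~\ref{prop:first order-criticality}. That definition asks that $D\Per({\bf c})[\delta{\bf c}]=0$ for every $\delta{\bf c}\in\Roll({\bf c})$. A single rolling direction with $D\Per({\bf c})[\delta{\bf c}]<0$ violates this. Here $D\Per({\bf c})$ means $D\widetilde\Per({\bf c})$, which is legitimate because $\widetilde\Per=\Per$ on the hull stable neighbourhood and $\widetilde\Per$ is smooth there (Remark~\ref{rem:smooth}).

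For the second assertion I will realise the infinitesimal direction as an actual curve inside $X$. By Remark~\ref{rem:interior-class} there is a neighbourhood $U_0$ of ${\bf c}$ with $X\cap U_0=M(G)\cap U_0$. By Lemma~\ref{lem:tangent-identification}, $T_{\bf c}M(G)=\Roll({\bf c})$. Since $M(G)$ is a smooth embedded submanifold near ${\bf c}$ (Lemma~\ref{lem:manifold-rolling}), there is a $C^1$ curve $\gamma:(-\varepsilon,\varepsilon)\to M(G)\cap U_0$ with $\gamma(0)={\bf c}$ and $\gamma'(0)=\delta{\bf c}$. Concretely, one can take a local chart, or project ${\bf c}+t\delta{\bf c}$ onto $M(G)$ along the row space of $A({\bf c})$ using the implicit function theorem, which applies because $A({\bf c})$ has full row rank. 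Then $\gamma(t)\in X$ for small $|t|$.

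Next I compose with the smooth function $\widetilde\Per$. The chain rule gives
\[
\frac{d}{dt}\Big|_{t=0}\Per(\gamma(t))=D\widetilde\Per({\bf c})[\delta{\bf c}]<0,
\]
so $\Per(\gamma(t))<\Per({\bf c})$ for all sufficiently small $t>0$. Since $\gamma(t)\to{\bf c}$, every neighbourhood of ${\bf c}$ in $X$ contains points of strictly smaller perimeter, and ${\bf c}$ is therefore not a local minimiser of $\Per$ on $X$.

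The only step needing care is producing the curve in $X$ rather than merely in $M(G)$. This uses the relative interior hypothesis: no non edge closes and the hull data stay fixed in $U_0$, which is exactly the content of Remark~\ref{rem:interior-class}. Everything else is routine.
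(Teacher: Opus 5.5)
Your proof is correct and follows the paper's argument. The first claim is read off directly from the definition of class criticality, and the second realises $\delta{\bf c}$ as a curve in $X$ via $X\cap U_0=M(G)\cap U_0$ and then applies the chain rule to $\widetilde\Per$; your implicit-function construction of that curve, with only $C^1$ regularity needed at first order, fills in a step the paper simply asserts.
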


\begin{proof}
Class criticality means that $D\Per({\bf c})[\eta]=0$ for every $\eta\in\Roll({\bf c})$. So the existence of $\delta{\bf c}\in\Roll({\bf c})$ with $D\Per({\bf c})[\delta{\bf c}]<0$ rules out class criticality.

Since \({\bf c}\) is a relative interior regular point of \(X\), there exists a $C^2$ curve ${\bf c}(t)\subset X$ with ${\bf c}(0)={\bf c}$ and $\dot{\bf c}(0)=\delta{\bf c}$. Hence $\left.\frac{d}{dt}\right|_{t=0}\Per({\bf c}(t))<0$, so $\Per({\bf c}(t))<\Per({\bf c})$ for all sufficiently small $t>0$. Thus ${\bf c}$ is not a local minimiser of $\Per$ on $X$.
\end{proof}

\subsection{Hull leaves}
A hull leaf is a hull vertex of degree one in \(G\).

\begin{theorem}
\label{thm:leaf-interior-descent}
Let \(X=\mathcal F\cap\mathcal C(G)\), and let \({\bf c}\in X\) be a relative
interior regular point. Let \(i\) be a hull leaf, and let \(j\) be its unique
neighbour in \(G\). Let \({\bf t}_{\mathrm{in}}(i)\) and
\({\bf t}_{\mathrm{out}}(i)\) be the incoming and outgoing unit tangent vectors
of the cyclic hull at \(i\), and set
\[
{\bf g}_i
=
{\bf t}_{\mathrm{in}}(i)-{\bf t}_{\mathrm{out}}(i).
\]
If \({\bf c}\) is class critical, then \({\bf g}_i\) is parallel to the contact
normal \({\bf u}_{ij}\). Consequently, if \({\bf g}_i\) is not parallel to
\({\bf u}_{ij}\), then \({\bf c}\) is not class critical and in particular
cannot be a local minimiser of \(\Per\) on \(X\).
\end{theorem}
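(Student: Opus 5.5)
The plan is to produce an explicit rolling direction that moves only the leaf \(i\), and then read off the \(i\)-block of the multiplier equation in Proposition~\ref{prop:first order-criticality}.

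First, compute the gradient of \(\widetilde\Per\) with respect to \({\bf c}_i\). Let \(p\) and \(q\) be the hull neighbours of \(i\) in the cyclic edge list \(\mathcal B\), so that \((p,i),(i,q)\in\mathcal B\). The only terms of \(\widetilde\Per\) depending on \({\bf c}_i\) are \(\|{\bf c}_i-{\bf c}_p\|+\|{\bf c}_q-{\bf c}_i\|\). Write \({\bf t}_{\mathrm{in}}(i)=({\bf c}_i-{\bf c}_p)/\|{\bf c}_i-{\bf c}_p\|\) and \({\bf t}_{\mathrm{out}}(i)=({\bf c}_q-{\bf c}_i)/\|{\bf c}_q-{\bf c}_i\|\). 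Then
\[
\nabla_{{\bf c}_i}\widetilde\Per({\bf c})={\bf t}_{\mathrm{in}}(i)-{\bf t}_{\mathrm{out}}(i)={\bf g}_i .
\]
Since \(i\) is a hull vertex, \(p\ne i\ne q\), and these distances are positive, so the formula is well defined. If the hull has only two vertices, then \(p=q\) and the same formula still holds.

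Next, consider the variation \(\delta{\bf c}\) with \(\delta{\bf c}_k={\bf 0}\) for \(k\ne i\) and \(\delta{\bf c}_i={\bf u}_{ij}^{\perp}=J{\bf u}_{ij}\). The only contact involving \(i\) is \(\{i,j\}\). For it,
\[
\langle{\bf u}_{ij},\delta{\bf c}_j-\delta{\bf c}_i\rangle=-\langle{\bf u}_{ij},J{\bf u}_{ij}\rangle=0 .
\]
Every other contact involves two unmoved disks, so its constraint is trivially satisfied. Hence \(\delta{\bf c}\in\Roll({\bf c})\). Class criticality gives
\[
0=D\widetilde\Per({\bf c})[\delta{\bf c}]=\langle{\bf g}_i,J{\bf u}_{ij}\rangle .
\]
In the plane, this says exactly that \({\bf g}_i\) is parallel to \({\bf u}_{ij}\), allowing \({\bf g}_i={\bf 0}\).

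Equivalently, from the multiplier equation: the \(i\)-block of \(A({\bf c})^\top\lambda\) has only one contribution, \(-\lambda_{ij}{\bf u}_{ij}\), so \({\bf g}_i=-\lambda_{ij}{\bf u}_{ij}\). Either form gives the parallelism.

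For the contrapositive, suppose \({\bf g}_i\) is not parallel to \({\bf u}_{ij}\). Then \(\langle{\bf g}_i,J{\bf u}_{ij}\rangle\ne 0\). Replacing \(\delta{\bf c}\) by \(-\delta{\bf c}\) if necessary yields a rolling direction with negative derivative. Proposition~\ref{prop:direct-first order-exclusion} then shows that \({\bf c}\) is not class critical and not a local minimiser on \(X\).

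The main subtlety, rather than a real obstacle, is justifying that \(D\Per=D\widetilde\Per\) along \(\Roll({\bf c})\). The relative interior and hull stability assumptions give \(\Per=\widetilde\Per\) on a neighbourhood of \({\bf c}\) in \(X\). By Remark~\ref{rem:interior-class}, every rolling direction is tangent to a \(C^1\) curve in \(X\), which settles this point.
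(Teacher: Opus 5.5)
Your proof is correct and follows the same argument as the paper. You take a variation supported at the leaf \(i\) with \(\delta{\bf c}_i\perp{\bf u}_{ij}\), observe that it lies in \(\Roll({\bf c})\) because \(\{i,j\}\) is the only contact involving \(i\), compute its first variation as \(\langle{\bf g}_i,\delta{\bf c}_i\rangle\), and then use class criticality, with a sign flip for the contrapositive, to force \({\bf g}_i\parallel{\bf u}_{ij}\). Your explicit computation \(\nabla_{{\bf c}_i}\widetilde\Per({\bf c})={\bf t}_{\mathrm{in}}(i)-{\bf t}_{\mathrm{out}}(i)\), which the paper only asserts, is a welcome addition, and so is the equivalent multiplier reading \({\bf g}_i=-\lambda_{ij}{\bf u}_{ij}\).
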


\begin{proof}
Choose $\delta{\bf c}$ supported at vertex $i$ and orthogonal to ${\bf u}_{ij}$.
Since $i$ has exactly one active neighbour, the only rolling constraint
involving $i$ is
\[
\langle {\bf u}_{ij},\delta{\bf c}_i\rangle=0
\]
with $\delta{\bf c}_j=0$, which is satisfied. Hence
\[
\delta{\bf c}\in\Roll({\bf c}).
\]
Since ${\bf c}$ is a relative interior regular point of $X$, there exists a
$C^2$ curve in $X$ tangent to $\delta{\bf c}$. The first variation gives
\[
D\Per({\bf c})[\delta{\bf c}]
=
\langle {\bf g}_i,\delta{\bf c}_i\rangle.
\]
If ${\bf g}_i$ has a nonzero component orthogonal to ${\bf u}_{ij}$, choosing
the sign of $\delta{\bf c}_i$ yields a negative first variation, contradicting
class criticality. Therefore
\[
{\bf g}_i\in\operatorname{span}\{{\bf u}_{ij}\}.
\]
\end{proof}

\section{Second variation on a fixed hull contact class}
\label{sec:second-variation}

This section studies the intrinsic second variation on a fixed hull and contact class. The restricted Hessian separates descent directions from rigid motions and class flexes, detecting instability, flat degeneracy, and rigidity. Appendix~\ref{sec:examples-four-disks} exemplifies the theory for four disk clusters: the square, the rhombus family, the rhombus with diagonal contact, and the star with one central disk and three leaves. The intrinsic spectra distinguish the three outcomes.

\subsection{Local extension and contact constraints}
On the fixed hull class, use the smooth extension
\[
\widetilde\Per({\bf d})
=
\sum_{(p,q)\in\mathcal B}\|{\bf d}_q-{\bf d}_p\|+2\pi,
\qquad
\widetilde\Per=\Per \text{ on } X.
\]

For the second variation we use the normalised contact constraints
\[
h_{ij}({\bf d})=\|{\bf d}_j-{\bf d}_i\|-2,
\qquad
\{i,j\}\in E(G).
\]
They have the same zero set as the squared constraints \(g_{ij}\), but their
first derivatives are exactly the contact operator rows:
\[
Dh_{ij}({\bf c})[\delta{\bf c}]
=
\langle {\bf u}_{ij},\delta{\bf c}_j-\delta{\bf c}_i\rangle.
\]
Thus class criticality gives the multiplier equation
\[
\nabla\widetilde\Per({\bf c})
=
\sum_{\{i,j\}\in E(G)}\lambda_{ij}\nabla h_{ij}({\bf c}).
\]

\begin{theorem}
\label{thm:second-variation-per}
Let ${\bf c}$ be a relative interior regular class critical point of a fixed hull and
contact class $X=\mathcal F\cap\mathcal C(G)$. Let $\widetilde\Per$ be the
local smooth extension of $\Per$ obtained by fixing the hull cycle, and let
\(\lambda=(\lambda_{ij})_{\{i,j\}\in E(G)}\) be the multiplier vector satisfying
\[
\nabla\widetilde\Per({\bf c})
=
\sum_{\{i,j\}\in E(G)}
\lambda_{ij}\nabla h_{ij}({\bf c}),
\qquad
h_{ij}({\bf d})=\|{\bf d}_j-{\bf d}_i\|-2.
\]
Then for every
\(\delta{\bf c}\in T_{\bf c}X=\Roll({\bf c})\) and every \(C^2\) curve
${\bf c}(t)\subset X$ with ${\bf c}(0)={\bf c}$ and
\(\dot{\bf c}(0)=\delta{\bf c}\), one has

\begin{equation}
\label{eq:lagrangian-second-variation}
\left.\frac{d^2}{dt^2}\right|_{t=0}\Per({\bf c}(t))
=
D^2\widetilde\Per({\bf c})[\delta{\bf c},\delta{\bf c}]
-
\sum_{\{i,j\}\in E(G)}
\lambda_{ij}D^2 h_{ij}({\bf c})[\delta{\bf c},\delta{\bf c}].
\end{equation}

Moreover, such a \(C^2\) curve exists for every
\(\delta{\bf c}\in\Roll({\bf c})\).
\end{theorem}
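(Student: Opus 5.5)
The plan is the standard Lagrangian second-variation computation, carried out on the equality manifold \(M(G)\) and transferred to \(X\) via Remark~\ref{rem:interior-class}.

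\textbf{Existence of curves.} Since \({\bf c}\) is a relative interior regular point, Remark~\ref{rem:interior-class} gives a neighbourhood \(U_0\) with \(X\cap U_0=M(G)\cap U_0\). By Lemma~\ref{lem:manifold-rolling}, \(M(G)\) is a smooth embedded submanifold near \({\bf c}\), and by Lemma~\ref{lem:tangent-identification} its tangent space at \({\bf c}\) is \(\Roll({\bf c})\). Given \(\delta{\bf c}\in\Roll({\bf c})\), I would take a local chart, or the implicit function theorem applied to \(F\), and write \(M(G)\) locally as a graph over \(\Roll({\bf c})\). Setting \({\bf c}(t)=\Phi(t\,\delta{\bf c})\) for the smooth graph parametrisation \(\Phi\) gives a smooth curve with \({\bf c}(0)={\bf c}\) and \(\dot{\bf c}(0)=\delta{\bf c}\). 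For small \(t\) it stays in \(M(G)\cap U_0\subset X\).

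\textbf{The formula.} Fix any \(C^2\) curve \({\bf c}(t)\subset X\) as in the statement. Shrinking the time interval, it lies in the hull-stable neighbourhood where \(\Per=\widetilde\Per\), so \(\Per({\bf c}(t))=\widetilde\Per({\bf c}(t))\). It also lies in \(M(G)\), so \(h_{ij}({\bf c}(t))\equiv0\) for every edge. Near \({\bf c}\) the differences \({\bf d}_j-{\bf d}_i\) stay away from zero: for contacts their length is \(2\), and for hull edges the endpoints are distinct points. Hence \(\widetilde\Per\) and the \(h_{ij}\) are smooth there. Define the Lagrangian
\[
L({\bf d})=\widetilde\Per({\bf d})-\sum_{\{i,j\}\in E(G)}\lambda_{ij}h_{ij}({\bf d}).
\]
Then \(L({\bf c}(t))=\Per({\bf c}(t))\) for all small \(t\), and \(\nabla L({\bf c})=0\) by the multiplier equation. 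The chain rule gives
\[
\frac{d^2}{dt^2}L({\bf c}(t))=D^2L({\bf c}(t))[\dot{\bf c},\dot{\bf c}]+DL({\bf c}(t))[\ddot{\bf c}].
\]
At \(t=0\) the acceleration term vanishes because \(DL({\bf c})=0\). Expanding \(D^2L\) then yields \eqref{eq:lagrangian-second-variation}.

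\textbf{Main obstacle.} The only delicate point is the dependence on the curve. The acceleration \(\ddot{\bf c}(0)\) differs between curves with the same velocity, so the formula holds only because the multiplier term exactly cancels \(D\widetilde\Per({\bf c})[\ddot{\bf c}(0)]\). Concretely, differentiating \(h_{ij}({\bf c}(t))=0\) twice gives
\[
Dh_{ij}({\bf c})[\ddot{\bf c}(0)]=-D^2h_{ij}({\bf c})[\delta{\bf c},\delta{\bf c}].
\]
Substituting this into the multiplier equation produces the correction term in \eqref{eq:lagrangian-second-variation}. Working with \(L\) builds this cancellation in automatically, so the remaining work is to confirm smoothness of all terms and that the curve stays inside the hull-stable neighbourhood.
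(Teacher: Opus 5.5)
Your proposal is correct and follows essentially the same route as the paper. Both arguments get the curves from the submanifold structure of \(M(G)\) near \({\bf c}\) together with \(X=M(G)\) locally, then differentiate \(h_{ij}({\bf c}(t))=0\) twice and use the multiplier equation to cancel the acceleration term \(\langle\nabla\widetilde\Per({\bf c}),\ddot{\bf c}(0)\rangle\). Packaging this as the Lagrangian \(L\) with \(\nabla L({\bf c})=0\) is just a compact form of the cancellation the paper writes out explicitly.
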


\begin{proof}
Since \({\bf c}\) is a relative interior regular point of \(X\), the manifold
\(M(G)\) is smooth near \({\bf c}\), and \(X=M(G)\) in a neighbourhood of
\({\bf c}\) inside \(M(G)\). Hence every
$\delta{\bf c}\in T_{\bf c}X=T_{\bf c}M(G)$ is tangent to a $C^2$ curve
${\bf c}(t)\subset X$ with ${\bf c}(0)={\bf c}$ and
$\dot{\bf c}(0)=\delta{\bf c}$.

Fix such a curve and write $a=\ddot{\bf c}(0)$. Since $\Per=\widetilde\Per$ on
$X$,
\[
\left.\frac{d^2}{dt^2}\right|_{t=0}\Per({\bf c}(t))
=
D^2\widetilde\Per({\bf c})[\delta{\bf c},\delta{\bf c}]
+
\langle \nabla \widetilde\Per({\bf c}),a\rangle.
\]
Because ${\bf c}(t)\subset M(G)$, one has $h_{ij}({\bf c}(t))=0$ for every
$\{i,j\}\in E(G)$. Differentiating twice at $t=0$ gives
\[
0
=
D^2 h_{ij}({\bf c})[\delta{\bf c},\delta{\bf c}]
+
\langle \nabla h_{ij}({\bf c}),a\rangle.
\]
Multiplying by $\lambda_{ij}$ and summing over $E(G)$, then using
\[
\nabla \widetilde\Per({\bf c})
=
\sum_{\{i,j\}\in E(G)}\lambda_{ij}\,\nabla h_{ij}({\bf c}),
\]
yields
\[
\langle \nabla \widetilde\Per({\bf c}),a\rangle
=
-
\sum_{\{i,j\}\in E(G)}\lambda_{ij}\,D^2 h_{ij}({\bf c})[\delta{\bf c},\delta{\bf c}].
\]

Substituting proves \eqref{eq:lagrangian-second-variation}. The right-hand side
depends only on $\delta{\bf c}$, so the second derivative is independent of the
chosen admissible curve.
\end{proof}

\begin{definition}
For $\delta{\bf c}\in\Roll({\bf c})$ define
\[
D^2\Per({\bf c})[\delta{\bf c}]
=
\left.\frac{d^2}{dt^2}\right|_{t=0}\Per({\bf c}(t)),
\]
where ${\bf c}(t)\subset X$ is any $C^2$ curve with
${\bf c}(0)={\bf c}$ and $\dot{\bf c}(0)=\delta{\bf c}$. This is well defined by
Theorem~\ref{thm:second-variation-per}.
\end{definition}

\subsection{Hull contribution}
For each oriented hull edge \((p,q)\in\mathcal B\), set
\[
{\bf t}_{pq}
=
\frac{{\bf c}_q-{\bf c}_p}{\|{\bf c}_q-{\bf c}_p\|}.
\]
The hull contribution comes from differentiating the edge length
\[
{\bf d}\longmapsto \|{\bf d}_q-{\bf d}_p\|.
\]
Its second derivative at \({\bf c}\), in the relative displacement
\(\delta{\bf c}_q-\delta{\bf c}_p\), is
\[
\frac{1}{\|{\bf c}_q-{\bf c}_p\|}
\left\langle
\left(I-{\bf t}_{pq}{\bf t}_{pq}^\top\right)(\delta{\bf c}_q-\delta{\bf c}_p),
\delta{\bf c}_q-\delta{\bf c}_p
\right\rangle.
\]
Thus each hull edge contributes the \(2\times2\) transverse Hessian block
\begin{equation}
\label{eq:euclidean-block}
M_{pq}
=
\frac{1}{\|{\bf c}_q-{\bf c}_p\|}
\left(I-{\bf t}_{pq}{\bf t}_{pq}^\top\right).
\end{equation}
Summing the edge contributions gives
\[
D^2\widetilde\Per({\bf c})[\delta{\bf c},\delta{\bf c}]
=
\sum_{(p,q)\in\mathcal B}
\left\langle
M_{pq}(\delta{\bf c}_q-\delta{\bf c}_p),\delta{\bf c}_q-\delta{\bf c}_p
\right\rangle.
\]
The hull Hessian \(\mathcal H_{\mathrm{hull}}\) is the symmetric block matrix
obtained by assembling these \(M_{pq}\) blocks in the \(p,q\) coordinate blocks,
so that
\[
D^2\widetilde\Per({\bf c})[\delta{\bf c},\delta{\bf c}]
=
\delta{\bf c}^\top\mathcal H_{\mathrm{hull}}\delta{\bf c}.
\]

\begin{remark}
Each \(M_{pq}\) is positive semidefinite, since
\(I-{\bf t}_{pq}{\bf t}_{pq}^\top\) is the orthogonal projection onto
\({\bf t}_{pq}^{\perp}\). Hence
\[
D^2\widetilde\Per({\bf c})[\delta{\bf c},\delta{\bf c}]\ge 0
\qquad
\text{for all }\delta{\bf c}\in(\R^2)^n.
\]
Thus \(\widetilde\Per\) is convex on the hull stable neighbourhood.
\end{remark}

\subsection{Contact curvature contribution}
For each active contact \(\{i,j\}\in E(G)\), the Hessian of \(h_{ij}\) in the
relative displacement \(\delta{\bf c}_j-\delta{\bf c}_i\) is
\[
\frac{1}{\|{\bf c}_j-{\bf c}_i\|}
\left(I-{\bf u}_{ij}{\bf u}_{ij}^{\top}\right)
=
\frac{1}{2}
\left(I-{\bf u}_{ij}{\bf u}_{ij}^{\top}\right).
\]
Thus the Lagrangian term \(-\lambda_{ij}D^2h_{ij}({\bf c})\) contributes the
\(2\times2\) transverse block
\begin{equation}
\label{eq:geometric-block}
K_{ij}
=
-\frac{\lambda_{ij}}{2}
\left(I-{\bf u}_{ij}{\bf u}_{ij}^{\top}\right).
\end{equation}
The contact Hessian \(\mathcal H_{\mathrm{cont}}\) is the symmetric block matrix
obtained by assembling these \(K_{ij}\) blocks in the \(i,j\) coordinate blocks,
so that
\[
-\sum_{\{i,j\}\in E(G)}\lambda_{ij}D^2 h_{ij}({\bf c})[\delta{\bf c},\delta{\bf c}]
=
\delta{\bf c}^{\top}\mathcal H_{\mathrm{cont}}\delta{\bf c}.
\]

\subsection{The total and intrinsic Hessians}
\begin{definition}
The total Hessian is
\[
\mathcal H
=
\mathcal H_{\mathrm{hull}}+\mathcal H_{\mathrm{cont}}.
\]
\end{definition}

By Theorem~\ref{thm:second-variation-per},
\begin{equation}
\label{eq:second-var-quadratic}
D^2\Per({\bf c})[\delta{\bf c}]
=
\delta{\bf c}^\top\mathcal H\delta{\bf c}
\qquad
\text{for all }\delta{\bf c}\in\Roll({\bf c}).
\end{equation}

\begin{definition}
Let $Z$ be a matrix whose columns form an orthonormal basis of
\[
\Roll({\bf c})\cap R({\bf c})^\perp.
\]
The intrinsic Hessian of $\Per$ at ${\bf c}$ is
\begin{equation}
\label{eq:intrinsic-hessian}
\mathcal H_{\mathrm{intr}}
=
Z^\top\mathcal HZ.
\end{equation}
\end{definition}

If $\delta{\bf c}=Zy\in\Roll({\bf c})\cap R({\bf c})^\perp$, then
\[
D^2\Per({\bf c})[\delta{\bf c}]=y^\top\mathcal H_{\mathrm{intr}}y.
\]

\subsection{Reduced second order diagnostics}
The intrinsic Hessian is used here as a diagnostic on the reduced rolling space
\[
\mathcal R_{\bf c}:=\Roll({\bf c})\cap R({\bf c})^\perp.
\]
In the finite classes considered below, the relevant outcomes are second order instability, rigidity modulo rigid motions, and flat degeneracy.

\begin{definition}[Reduced variational type]
\label{def:reduced-type}
Let ${\bf c}$ be a relative interior regular class-critical point of a fixed hull and contact class \(X=\mathcal F\cap\mathcal C(G)\). We say that \({\bf c}\) is:
\begin{itemize}
\item {\bf second order unstable} if there exists \(0\ne\delta{\bf c}\in\mathcal R_{\bf c}\) such that
\[
D^2\Per({\bf c})[\delta{\bf c}]<0;
\]
\item {\bf rigid modulo rigid motions} if
\[
\mathcal R_{\bf c}=\{0\};
\]
\item {\bf flat degenerate} if
\[
\mathcal R_{\bf c}\ne\{0\},
\]
\[
D^2\Per({\bf c})[\delta{\bf c}]\ge 0
\qquad
\text{for all }\delta{\bf c}\in\mathcal R_{\bf c},
\]
and the intrinsic Hessian has nontrivial kernel on \(\mathcal R_{\bf c}\).
\end{itemize}
\end{definition}

\begin{proposition}[Second order exclusion]
\label{prop:second order-exclusion}
Let ${\bf c}$ be a relative interior regular class-critical point of a fixed hull and contact class \(X=\mathcal F\cap\mathcal C(G)\). If \({\bf c}\) is second order unstable, then \({\bf c}\) is not a local minimiser of \(\Per\) on \(X\), modulo rigid motions.
\end{proposition}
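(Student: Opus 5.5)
The plan is to pick a direction that certifies instability and build a curve in \(X\) along it on which \(\Per\) drops strictly at second order. By second order instability there is \(0\ne\delta{\bf c}\in\mathcal R_{\bf c}=\Roll({\bf c})\cap R({\bf c})^\perp\) with \(D^2\Per({\bf c})[\delta{\bf c}]<0\). Since \({\bf c}\) is a relative interior regular point, Theorem~\ref{thm:second-variation-per} provides a \(C^2\) curve \({\bf c}(t)\subset X\) with \({\bf c}(0)={\bf c}\) and \(\dot{\bf c}(0)=\delta{\bf c}\). It also shows that the second derivative of \(\Per({\bf c}(t))\) at \(t=0\) equals \(\delta{\bf c}^\top\mathcal H\delta{\bf c}<0\), independently of the chosen curve.

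Next comes the Taylor expansion. Because \({\bf c}\) is class critical and \(\delta{\bf c}\in\Roll({\bf c})\), the first derivative \(D\Per({\bf c})[\delta{\bf c}]\) vanishes. Along the curve, \(\Per=\widetilde\Per\) is \(C^2\) in \(t\), so
\[
\Per({\bf c}(t))=\Per({\bf c})+\tfrac{t^2}{2}D^2\Per({\bf c})[\delta{\bf c}]+o(t^2).
\]
Hence \(\Per({\bf c}(t))<\Per({\bf c})\) for all sufficiently small \(t\ne0\). Every neighbourhood of \({\bf c}\) in \(X\) therefore contains configurations of strictly smaller perimeter, and \({\bf c}\) is not a local minimiser on \(X\).

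The step I expect to need the most care is the phrase ``modulo rigid motions''. I must check that the decrease is not an artefact of moving along orbits of the Euclidean group. This is immediate, because \(\Per\) is invariant under rigid motions, so any curve lying in the rigid motion orbit of \({\bf c}\) keeps \(\Per\) constant. A strict decrease therefore forces \({\bf c}(t)\) to leave the orbit of \({\bf c}\). Equivalently, the images \([{\bf c}(t)]\) in the quotient \(X/\mathrm{E}(2)\) converge to \([{\bf c}]\) and have strictly smaller perimeter. So \([{\bf c}]\) is not a local minimiser in the quotient either.

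The choice \(\delta{\bf c}\perp R({\bf c})\) matters only to guarantee that \(\delta{\bf c}\) is a genuine nonrigid direction. By Lemma~\ref{lem:R-in-Roll}, \(R({\bf c})\subset\Roll({\bf c})\), and \(D^2\Per\) vanishes on \(R({\bf c})\) because rotation curves keep \(\Per\) constant. A negative value can therefore only arise from a component transverse to \(R({\bf c})\), which confirms that the reduction loses nothing.
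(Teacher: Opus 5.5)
Your proof is correct and follows the paper's argument: you realise a destabilising direction in $\mathcal R_{\bf c}$ by a $C^2$ curve in $X$, use class criticality to make the first derivative vanish and the negative second variation to get strict decrease, and your handling of the quotient by rigid motions spells out what the paper leaves implicit. Your closing remark is not needed for the proof, and as written its ``therefore'' requires $R({\bf c})\subset\ker\mathcal H$, not merely $D^2\Per({\bf c})[r]=0$ for $r\in R({\bf c})$; the inclusion does hold, because the Lagrangian $\widetilde\Per-\sum\lambda_{ij}h_{ij}$ is invariant under rigid motions and has vanishing gradient at ${\bf c}$.
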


\begin{proof}
Let \(0\ne\delta{\bf c}\in\mathcal R_{\bf c}\) satisfy
\[
D^2\Per({\bf c})[\delta{\bf c}]<0.
\]
Since \({\bf c}\) is a relative interior regular point of \(X\), there is a \(C^2\) curve
\[
{\bf c}(t)\subset X,\qquad {\bf c}(0)={\bf c},\qquad
\dot{\bf c}(0)=\delta{\bf c}.
\]
Class criticality gives
\[
\left.\frac{d}{dt}\right|_{t=0}\Per({\bf c}(t))=0,
\]
while the choice of \(\delta{\bf c}\) gives
\[
\left.\frac{d^2}{dt^2}\right|_{t=0}\Per({\bf c}(t))<0.
\]
Hence \(\Per({\bf c}(t))<\Per({\bf c})\) for all sufficiently small nonzero \(t\) of one sign. Therefore \({\bf c}\) is not a local minimiser of \(\Per\) on \(X\), modulo rigid motions.
\end{proof}

\begin{proposition}[Rigidity modulo rigid motions]
\label{prop:rigid-reduced-class}
Let ${\bf c}$ be a relative interior regular point of a fixed hull and contact class \(X=\mathcal F\cap\mathcal C(G)\). If
\[
\mathcal R_{\bf c}=\{0\},
\]
then \({\bf c}\) has no nontrivial first order admissible motion in \(X\) after quotienting by rigid motions. Equivalently,
\[
\Roll({\bf c})=R({\bf c}).
\]
\end{proposition}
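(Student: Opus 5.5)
The plan is a short linear-algebra argument. By Lemma~\ref{lem:R-in-Roll}, $R({\bf c})$ is a linear subspace of $\Roll({\bf c})$. Since $\Roll({\bf c})=\ker A({\bf c})$ is a finite dimensional inner product space, it splits orthogonally as
\[
\Roll({\bf c})=R({\bf c})\oplus\bigl(\Roll({\bf c})\cap R({\bf c})^\perp\bigr)=R({\bf c})\oplus\mathcal R_{\bf c}.
\]
This holds because for $v\in\Roll({\bf c})$, its orthogonal projection onto $R({\bf c})$ again lies in $\Roll({\bf c})$. The remainder therefore lies in $\Roll({\bf c})\cap R({\bf c})^\perp$.

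From this decomposition the equivalence $\mathcal R_{\bf c}=\{0\}\iff\Roll({\bf c})=R({\bf c})$ is immediate. If $\mathcal R_{\bf c}=\{0\}$, the decomposition gives $\Roll({\bf c})=R({\bf c})$. Conversely, if $\Roll({\bf c})=R({\bf c})$, then $\mathcal R_{\bf c}=R({\bf c})\cap R({\bf c})^\perp=\{0\}$.

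Next I would interpret the first order admissible motions in $X$. Since ${\bf c}$ is a relative interior regular point, Remark~\ref{rem:interior-class} together with Lemma~\ref{lem:tangent-identification} identifies them with $T_{\bf c}X=\Roll({\bf c})$. Quotienting by rigid motions means passing to $\Roll({\bf c})/R({\bf c})$, which is isomorphic to $\mathcal R_{\bf c}$ by the splitting above. Hence this quotient vanishes exactly when $\mathcal R_{\bf c}=\{0\}$, giving the stated conclusion.

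There is no genuine obstacle. The only point needing care is to state precisely what ``no nontrivial first order admissible motion in $X$ after quotienting by rigid motions'' means. I take it to mean $T_{\bf c}X/R({\bf c})=0$, and that interpretation relies on the relative interior hypothesis to identify $T_{\bf c}X$ with $\Roll({\bf c})$.
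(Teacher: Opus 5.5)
Your proposal is correct and follows essentially the same route as the paper. The paper also writes $v\in\Roll({\bf c})$ as $r+w$ with $r\in R({\bf c})$ and $w\in R({\bf c})^\perp$, uses $R({\bf c})\subset\Roll({\bf c})$ to get $w\in\mathcal R_{\bf c}=\{0\}$, and concludes $\Roll({\bf c})=R({\bf c})$. Your converse direction and your identification $T_{\bf c}X=\Roll({\bf c})$ via the relative interior hypothesis only make explicit what the paper leaves implicit.
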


\begin{proof}
By Lemma~\ref{lem:R-in-Roll}, \(R({\bf c})\subset\Roll({\bf c})\). Let \(v\in\Roll({\bf c})\), and decompose
\[
v=r+w,
\qquad
r\in R({\bf c}),\quad w\in R({\bf c})^\perp.
\]
Since \(r\in R({\bf c})\subset\Roll({\bf c})\) and \(\Roll({\bf c})\) is linear, we have \(w=v-r\in\Roll({\bf c})\). Hence
\[
w\in\Roll({\bf c})\cap R({\bf c})^\perp=\{0\}.
\]
Thus \(v=r\in R({\bf c})\), so \(\Roll({\bf c})\subset R({\bf c})\). The reverse inclusion is Lemma~\ref{lem:R-in-Roll}; therefore
\[
\Roll({\bf c})=R({\bf c}).
\]
\end{proof}

\begin{proposition}[Flat class]
\label{prop:flat-class}
Let \({\bf c}\) be a relative interior regular point of a fixed hull and
contact class
\[
X=\mathcal F\cap\mathcal C(G),
\]
and suppose that
\[
\dim\mathcal R_{\bf c}=k>0.
\]
If every hull edge is an active contact edge, then, modulo rigid motions,
\(X\) is locally a \(k\) dimensional manifold near \({\bf c}\), and
\(\Per\) is locally constant on \(X\). Consequently, modulo rigid motions,
the nearby configurations form a local \(k\) parameter perimeter preserving
family and are therefore nonisolated.
\end{proposition}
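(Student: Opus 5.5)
The plan is to combine the manifold structure of Remark~\ref{rem:interior-class} with the Steiner-type formula of Remark~\ref{rem:smooth}. First, since \({\bf c}\) is a relative interior regular point, Remark~\ref{rem:interior-class} gives a neighbourhood \(U_0\) with \(X\cap U_0=M(G)\cap U_0\). By Lemma~\ref{lem:manifold-rolling} and Lemma~\ref{lem:tangent-identification}, this is a smooth embedded submanifold of dimension \(\dim\Roll({\bf c})\).

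The second step removes rigid motions. The Euclidean group \(SE(2)\) acts smoothly on \(M(G)\) and preserves both \(X\) and \(\Per\). For \(n\ge 2\) the action is free near \({\bf c}\), and its orbit through \({\bf c}\) has tangent space \(R({\bf c})\), of dimension \(3\). Lemma~\ref{lem:R-in-Roll} gives \(R({\bf c})\subset\Roll({\bf c})\), so
\[
\dim\Roll({\bf c})=3+\dim\mathcal R_{\bf c}=3+k.
\]
To realise the quotient concretely, take a local slice. Translating \({\bf c}(t)\) and rotating so that a chosen normalisation holds (fix \({\bf c}_1\) and the direction of \({\bf c}_2-{\bf c}_1\)), or equivalently intersecting \(M(G)\) with the affine subspace \({\bf c}+R({\bf c})^\perp\), gives a transversal. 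Its tangent space is exactly \(\Roll({\bf c})\cap R({\bf c})^\perp=\mathcal R_{\bf c}\). Transversality holds because \(R({\bf c})\) and \(R({\bf c})^\perp\) are complementary inside \(\Roll({\bf c})\) (as in the proof of Proposition~\ref{prop:rigid-reduced-class}). The implicit function theorem then shows that the slice is a \(k\)-dimensional manifold near \({\bf c}\) meeting each nearby orbit once. This gives the claim that \(X\) is locally \(k\)-dimensional modulo rigid motions. I expect this slice and transversality bookkeeping to be the only delicate point, and it is routine.

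The third step shows that \(\Per\) is locally constant. On \(U_0\cap\mathcal D_n\subset\mathcal F\) the hull edge list \(\mathcal B\) is fixed, so Remark~\ref{rem:smooth} gives
\[
\Per({\bf d})=\sum_{(p,q)\in\mathcal B}\|{\bf d}_q-{\bf d}_p\|+2\pi .
\]
The hypothesis says every \(\{p,q\}\) with \((p,q)\in\mathcal B\) lies in \(E(G)\). For \({\bf d}\in X\cap U_0\) we have \(\|{\bf d}_q-{\bf d}_p\|=2\), hence
\[
\Per({\bf d})=2|\mathcal B|+2\pi
\]
is constant. This holds on the whole neighbourhood, with no infinitesimal argument needed.

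Finally, the slice is a \(k\)-parameter family with \(k>0\), and \(\Per\) is constant on it. Distinct slice points lie in distinct rigid-motion orbits. Hence \({\bf c}\) is not isolated modulo rigid motions among configurations of the same perimeter, which proves the remaining assertions.
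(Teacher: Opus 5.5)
Your proposal is correct and follows the paper's proof. Both arguments use the relative interior condition to identify $T_{\bf c}X=\Roll({\bf c})$, use $R({\bf c})\subset\Roll({\bf c})$ to obtain $k$ dimensions modulo rigid motions, and observe that every hull edge has length $2$ on $X$, so $\Per\equiv 2|\mathcal B|+2\pi$ there. Your slice $M(G)\cap({\bf c}+R({\bf c})^\perp)$ just makes explicit the quotient step that the paper asserts directly; its transversality is most cleanly justified by $T_{\bf c}M(G)+R({\bf c})^\perp\supseteq R({\bf c})+R({\bf c})^\perp=(\R^2)^n$.
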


\begin{proof}
Since \({\bf c}\) is a relative interior regular point of \(X\), the class is
locally a smooth manifold with
\[
T_{\bf c}X=\Roll({\bf c}).
\]
After quotienting by rigid motions, the reduced tangent space at
\({\bf c}\) is
\[
\mathcal R_{\bf c}
=
\Roll({\bf c})\cap R({\bf c})^\perp.
\]
Hence the hypothesis
\[
\dim\mathcal R_{\bf c}=k
\]
implies that, modulo rigid motions, \(X\) is locally a \(k\) dimensional
manifold near \({\bf c}\).

Since the hull data are fixed on \(X\), the centre hull perimeter is the sum
of the lengths of its hull edges. By hypothesis every hull edge is an active
contact edge, and every configuration in \(X\) has contact graph \(G\).
Hence every hull edge has constant length \(2\) throughout \(X\). Therefore
\[
\Per({\bf d})=\Per({\bf c})
\]
for every \({\bf d}\in X\) sufficiently near \({\bf c}\). Thus \(\Per\) is
locally constant on \(X\). Consequently, modulo rigid motions, the nearby
configurations form a local \(k\) parameter perimeter preserving family and
are therefore nonisolated.
\end{proof}

\begin{figure}[ht]
\centering
\setlength{\tabcolsep}{8pt}
\renewcommand{\arraystretch}{1.5}
\tikzset{diskhull/.style={black!100,line width=.5pt}}
\begin{tabular}{cccc}

\begin{tikzpicture}[scale=\graphscale, line cap=round, line join=round]
\node[anchor=north west, inner sep=1pt, text=black!65, font=\scriptsize] at (-3.2,2.9) {(1)};
\draw[col0,line width=1.2pt] (0.000,0.000) -- (0.000,2.000);
\draw[col1,line width=1.2pt] (0.000,0.000) -- (-2.000,0.000);
\draw[col2,line width=1.2pt] (0.000,0.000) -- (2.000,0.000);
\draw[col3,line width=1.2pt] (0.000,0.000) -- (0.000,-2.000);
\fill[black!17,opacity=0.45] (0.000,0.000) circle (1.000);
\fill[black!17,opacity=0.45] (0.000,2.000) circle (1.000);
\fill[black!17,opacity=0.45] (-2.000,0.000) circle (1.000);
\fill[black!17,opacity=0.45] (2.000,0.000) circle (1.000);
\fill[black!17,opacity=0.45] (0.000,-2.000) circle (1.000);
\draw[gray,line width=0.8pt] (0.000,0.000) circle (1.000);
\draw[gray,line width=0.8pt] (0.000,2.000) circle (1.000);
\draw[gray,line width=0.8pt] (-2.000,0.000) circle (1.000);
\draw[gray,line width=0.8pt] (2.000,0.000) circle (1.000);
\draw[gray,line width=0.8pt] (0.000,-2.000) circle (1.000);
\draw[diskhull]
  (-2.707,-0.707)
  -- (-0.707,-2.707)
  arc[start angle=-135.0,end angle=-45.0,radius=1.000]
  -- (2.707,-0.707)
  arc[start angle=-45.0,end angle=45.0,radius=1.000]
  -- (0.707,2.707)
  arc[start angle=45.0,end angle=135.0,radius=1.000]
  -- (-2.707,0.707)
  arc[start angle=135.0,end angle=225.0,radius=1.000];
\filldraw[black] (0.000,0.000) circle (0.05);
\filldraw[black] (0.000,2.000) circle (0.05);
\filldraw[black] (-2.000,0.000) circle (0.05);
\filldraw[black] (2.000,0.000) circle (0.05);
\filldraw[black] (0.000,-2.000) circle (0.05);
\end{tikzpicture}
&
\begin{tikzpicture}[scale=\graphscale, line cap=round, line join=round] 
\node[anchor=north west, inner sep=1pt, text=black!65, font=\scriptsize] at (-4.6,3.5) {(2)};
\draw[col0,line width=1.2pt] (-3.366,0.000) -- (-1.366,0.000);
\draw[col1,line width=1.2pt] (-1.366,0.000) -- (0.634,0.000);
\draw[col2,line width=1.2pt] (0.634,0.000) -- (2.049,1.414);
\draw[col3,line width=1.2pt] (0.634,0.000) -- (2.049,-1.414);
\fill[black!17,opacity=0.45] (-3.366,0.000) circle (1.000);
\fill[black!17,opacity=0.45] (-1.366,0.000) circle (1.000);
\fill[black!17,opacity=0.45] (0.634,0.000) circle (1.000);
\fill[black!17,opacity=0.45] (2.049,1.414) circle (1.000);
\fill[black!17,opacity=0.45] (2.049,-1.414) circle (1.000);
\draw[gray,line width=0.8pt] (-3.366,0.000) circle (1.000);
\draw[gray,line width=0.8pt] (-1.366,0.000) circle (1.000);
\draw[gray,line width=0.8pt] (0.634,0.000) circle (1.000);
\draw[gray,line width=0.8pt] (2.049,1.414) circle (1.000);
\draw[gray,line width=0.8pt] (2.049,-1.414) circle (1.000);
\draw[diskhull]
  (-3.619,-0.968)
  -- (1.796,-2.382)
  arc[start angle=-104.6,end angle=0.0,radius=1.000]
  -- (3.049,1.414)
  arc[start angle=0.0,end angle=104.6,radius=1.000]
  -- (-3.619,0.968)
  arc[start angle=104.6,end angle=255.4,radius=1.000];
\filldraw[black] (-3.366,0.000) circle (0.05);
\filldraw[black] (-1.366,0.000) circle (0.05);
\filldraw[black] (0.634,0.000) circle (0.05);
\filldraw[black] (2.049,1.414) circle (0.05);
\filldraw[black] (2.049,-1.414) circle (0.05);
\end{tikzpicture}
&
\begin{tikzpicture}[scale=\graphscale, line cap=round, line join=round] 
\node[anchor=north west, inner sep=1pt, text=black!65, font=\scriptsize] at (-3.2,3.4) {(3)};
\draw[col0,line width=1.2pt] (0.000,-0.127) -- (1.414,-1.541);
\draw[col1,line width=1.2pt] (0.000,-0.127) -- (-1.414,-1.541);
\draw[col2,line width=1.2pt] (0.000,-0.127) -- (1.000,1.605);
\draw[col3,line width=1.2pt] (0.000,-0.127) -- (-1.000,1.605);
\draw[col4,line width=1.2pt] (1.000,1.605) -- (-1.000,1.605);
\fill[black!17,opacity=0.45] (0.000,-0.127) circle (1.000);
\fill[black!17,opacity=0.45] (1.414,-1.541) circle (1.000);
\fill[black!17,opacity=0.45] (-1.414,-1.541) circle (1.000);
\fill[black!17,opacity=0.45] (1.000,1.605) circle (1.000);
\fill[black!17,opacity=0.45] (-1.000,1.605) circle (1.000);
\draw[gray,line width=0.8pt] (0.000,-0.127) circle (1.000);
\draw[gray,line width=0.8pt] (1.414,-1.541) circle (1.000);
\draw[gray,line width=0.8pt] (-1.414,-1.541) circle (1.000);
\draw[gray,line width=0.8pt] (1.000,1.605) circle (1.000);
\draw[gray,line width=0.8pt] (-1.000,1.605) circle (1.000);
\draw[diskhull]
  (-1.414,-2.541)
  -- (1.414,-2.541)
  arc[start angle=-90.0,end angle=7.5,radius=1.000]
  -- (1.991,1.735)
  arc[start angle=7.5,end angle=90.0,radius=1.000]
  -- (-1.000,2.605)
  arc[start angle=90.0,end angle=172.5,radius=1.000]
  -- (-2.405,-1.411)
  arc[start angle=172.5,end angle=270.0,radius=1.000];
\filldraw[black] (0.000,-0.127) circle (0.05);
\filldraw[black] (1.414,-1.541) circle (0.05);
\filldraw[black] (-1.414,-1.541) circle (0.05);
\filldraw[black] (1.000,1.605) circle (0.05);
\filldraw[black] (-1.000,1.605) circle (0.05);
\end{tikzpicture}
&
\begin{tikzpicture}[scale=\graphscale, line cap=round, line join=round]
\node[anchor=north west, inner sep=1pt, text=black!65, font=\scriptsize] at (-4.0,3.4) {(4)};
\draw[col0,line width=1.2pt] (-2.731,0.000) -- (-0.731,0.000);
\draw[col1,line width=1.2pt] (-0.731,0.000) -- (0.683,1.414);
\draw[col2,line width=1.2pt] (-0.731,0.000) -- (0.683,-1.414);
\draw[col3,line width=1.2pt] (0.683,1.414) -- (2.097,0.000);
\draw[col4,line width=1.2pt] (0.683,-1.414) -- (2.097,0.000);
\fill[black!17,opacity=0.45] (-2.731,0.000) circle (1.000);
\fill[black!17,opacity=0.45] (-0.731,0.000) circle (1.000);
\fill[black!17,opacity=0.45] (0.683,1.414) circle (1.000);
\fill[black!17,opacity=0.45] (0.683,-1.414) circle (1.000);
\fill[black!17,opacity=0.45] (2.097,0.000) circle (1.000);
\draw[gray,line width=0.8pt] (-2.731,0.000) circle (1.000);
\draw[gray,line width=0.8pt] (-0.731,0.000) circle (1.000);
\draw[gray,line width=0.8pt] (0.683,1.414) circle (1.000);
\draw[gray,line width=0.8pt] (0.683,-1.414) circle (1.000);
\draw[gray,line width=0.8pt] (2.097,0.000) circle (1.000);
\draw[diskhull]
  (-3.114,-0.924)
  -- (0.300,-2.338)
  arc[start angle=-112.5,end angle=-45.0,radius=1.000]
  -- (2.804,-0.707)
  arc[start angle=-45.0,end angle=45.0,radius=1.000]
  -- (1.390,2.121)
  arc[start angle=45.0,end angle=112.5,radius=1.000]
  -- (-3.114,0.924)
  arc[start angle=112.5,end angle=247.5,radius=1.000];
\filldraw[black] (-2.731,0.000) circle (0.05);
\filldraw[black] (-0.731,0.000) circle (0.05);
\filldraw[black] (0.683,1.414) circle (0.05);
\filldraw[black] (0.683,-1.414) circle (0.05);
\filldraw[black] (2.097,0.000) circle (0.05);
\end{tikzpicture}
\\[0.3cm]

\begin{tikzpicture}[scale=\graphscale, line cap=round, line join=round]
\node[anchor=north west, inner sep=1pt, text=black!65, font=\scriptsize] at (-4.2,3.6) {(5)};
\draw[col0,line width=1.2pt] (-3.000,-0.346) -- (-1.000,-0.346);
\draw[col1,line width=1.2pt] (-1.000,-0.346) -- (0.000,1.386);
\draw[col2,line width=1.2pt] (-1.000,-0.346) -- (1.000,-0.346);
\draw[col3,line width=1.2pt] (0.000,1.386) -- (1.000,-0.346);
\draw[col4,line width=1.2pt] (1.000,-0.346) -- (3.000,-0.346);
\fill[black!17,opacity=0.45] (-3.000,-0.346) circle (1.000);
\fill[black!17,opacity=0.45] (-1.000,-0.346) circle (1.000);
\fill[black!17,opacity=0.45] (0.000,1.386) circle (1.000);
\fill[black!17,opacity=0.45] (1.000,-0.346) circle (1.000);
\fill[black!17,opacity=0.45] (3.000,-0.346) circle (1.000);
\draw[gray,line width=0.8pt] (-3.000,-0.346) circle (1.000);
\draw[gray,line width=0.8pt] (-1.000,-0.346) circle (1.000);
\draw[gray,line width=0.8pt] (0.000,1.386) circle (1.000);
\draw[gray,line width=0.8pt] (1.000,-0.346) circle (1.000);
\draw[gray,line width=0.8pt] (3.000,-0.346) circle (1.000);
\draw[diskhull]
  (-3.000,-1.346)
  -- (3.000,-1.346)
  arc[start angle=-90.0,end angle=60.0,radius=1.000]
  -- (0.500,2.252)
  arc[start angle=60.0,end angle=120.0,radius=1.000]
  -- (-3.500,0.520)
  arc[start angle=120.0,end angle=270.0,radius=1.000];
\filldraw[black] (-3.000,-0.346) circle (0.05);
\filldraw[black] (-1.000,-0.346) circle (0.05);
\filldraw[black] (0.000,1.386) circle (0.05);
\filldraw[black] (1.000,-0.346) circle (0.05);
\filldraw[black] (3.000,-0.346) circle (0.05);
\end{tikzpicture}
&
\begin{tikzpicture}[scale=\graphscale, line cap=round, line join=round]
\node[anchor=north west, inner sep=1pt, text=black!65, font=\scriptsize] at (-4.6,2.9) {(6)};
\draw[col0,line width=1.2pt] (-3.493,0.000) -- (-1.493,0.000);
\draw[col1,line width=1.2pt] (-1.493,0.000) -- (0.507,0.000);
\draw[col2,line width=1.2pt] (0.507,0.000) -- (2.239,1.000);
\draw[col3,line width=1.2pt] (2.239,1.000) -- (2.239,-1.000);
\draw[col4,line width=1.2pt] (0.507,0.000) -- (2.239,-1.000);
\fill[black!17,opacity=0.45] (-3.493,0.000) circle (1.000);
\fill[black!17,opacity=0.45] (-1.493,0.000) circle (1.000);
\fill[black!17,opacity=0.45] (0.507,0.000) circle (1.000);
\fill[black!17,opacity=0.45] (2.239,1.000) circle (1.000);
\fill[black!17,opacity=0.45] (2.239,-1.000) circle (1.000);
\draw[gray,line width=0.8pt] (-3.493,0.000) circle (1.000);
\draw[gray,line width=0.8pt] (-1.493,0.000) circle (1.000);
\draw[gray,line width=0.8pt] (0.507,0.000) circle (1.000);
\draw[gray,line width=0.8pt] (2.239,1.000) circle (1.000);
\draw[gray,line width=0.8pt] (2.239,-1.000) circle (1.000);
\draw[diskhull]
  (-3.665,-0.985)
  -- (2.067,-1.985)
  arc[start angle=-99.9,end angle=0.0,radius=1.000]
  -- (3.239,1.000)
  arc[start angle=0.0,end angle=99.9,radius=1.000]
  -- (-3.665,0.985)
  arc[start angle=99.9,end angle=260.1,radius=1.000];
\filldraw[black] (-3.493,0.000) circle (0.05);
\filldraw[black] (-1.493,0.000) circle (0.05);
\filldraw[black] (0.507,0.000) circle (0.05);
\filldraw[black] (2.239,1.000) circle (0.05);
\filldraw[black] (2.239,-1.000) circle (0.05);
\end{tikzpicture}
&
\begin{tikzpicture}[scale=\graphscale, line cap=round, line join=round]
\node[anchor=north west, inner sep=1pt, text=black!65, font=\scriptsize] at (-2.9,3.0) {(7)};
\draw[col0,line width=1.2pt] (-1.732,1.000) -- (-1.732,-1.000);
\draw[col1,line width=1.2pt] (-1.732,-1.000) -- (0.000,0.000);
\draw[col2,line width=1.2pt] (0.000,0.000) -- (-1.732,1.000);
\draw[col3,line width=1.2pt] (0.000,0.000) -- (1.732,1.000);
\draw[col4,line width=1.2pt] (1.732,1.000) -- (1.732,-1.000);
\draw[col5,line width=1.2pt] (1.732,-1.000) -- (0.000,0.000);
\fill[black!17,opacity=0.45] (-1.732,1.000) circle (1.000);
\fill[black!17,opacity=0.45] (-1.732,-1.000) circle (1.000);
\fill[black!17,opacity=0.45] (0.000,0.000) circle (1.000);
\fill[black!17,opacity=0.45] (1.732,1.000) circle (1.000);
\fill[black!17,opacity=0.45] (1.732,-1.000) circle (1.000);
\draw[gray,line width=0.8pt] (-1.732,1.000) circle (1.000);
\draw[gray,line width=0.8pt] (-1.732,-1.000) circle (1.000);
\draw[gray,line width=0.8pt] (0.000,0.000) circle (1.000);
\draw[gray,line width=0.8pt] (1.732,1.000) circle (1.000);
\draw[gray,line width=0.8pt] (1.732,-1.000) circle (1.000);
\draw[diskhull]
  (-1.732,-2.000)
  -- (1.732,-2.000)
  arc[start angle=-90.0,end angle=0.0,radius=1.000]
  -- (2.732,1.000)
  arc[start angle=0.0,end angle=90.0,radius=1.000]
  -- (-1.732,2.000)
  arc[start angle=90.0,end angle=180.0,radius=1.000]
  -- (-2.732,-1.000)
  arc[start angle=-180.0,end angle=-90.0,radius=1.000];
\filldraw[black] (-1.732,1.000) circle (0.05);
\filldraw[black] (-1.732,-1.000) circle (0.05);
\filldraw[black] (0.000,0.000) circle (0.05);
\filldraw[black] (1.732,1.000) circle (0.05);
\filldraw[black] (1.732,-1.000) circle (0.05);
\end{tikzpicture}
&
\begin{tikzpicture}[scale=\graphscale, line cap=round, line join=round]
\node[anchor=north west, inner sep=1pt, text=black!65, font=\scriptsize] at (-3.5,2.3) {(8)};
\draw[col0,line width=1.2pt] (-2.400,0.000) -- (-0.400,0.000);
\draw[col1,line width=1.2pt] (-0.400,0.000) -- (1.600,0.000);
\draw[col2,line width=1.2pt] (-0.400,0.000) -- (0.600,1.732);
\draw[col3,line width=1.2pt] (-0.400,0.000) -- (0.600,-1.732);
\draw[col4,line width=1.2pt] (1.600,0.000) -- (0.600,1.732);
\draw[col5,line width=1.2pt] (1.600,0.000) -- (0.600,-1.732);
\fill[black!17,opacity=0.45] (-2.400,0.000) circle (1.000);
\fill[black!17,opacity=0.45] (-0.400,0.000) circle (1.000);
\fill[black!17,opacity=0.45] (1.600,0.000) circle (1.000);
\fill[black!17,opacity=0.45] (0.600,1.732) circle (1.000);
\fill[black!17,opacity=0.45] (0.600,-1.732) circle (1.000);
\draw[gray,line width=0.8pt] (-2.400,0.000) circle (1.000);
\draw[gray,line width=0.8pt] (-0.400,0.000) circle (1.000);
\draw[gray,line width=0.8pt] (1.600,0.000) circle (1.000);
\draw[gray,line width=0.8pt] (0.600,1.732) circle (1.000);
\draw[gray,line width=0.8pt] (0.600,-1.732) circle (1.000);
\draw[diskhull]
  (-2.900,-0.866)
  -- (0.100,-2.598)
  arc[start angle=-120.0,end angle=-30.0,radius=1.000]
  -- (2.466,-0.500)
  arc[start angle=-30.0,end angle=30.0,radius=1.000]
  -- (1.466,2.232)
  arc[start angle=30.0,end angle=120.0,radius=1.000]
  -- (-2.900,0.866)
  arc[start angle=120.0,end angle=240.0,radius=1.000];
\filldraw[black] (-2.400,0.000) circle (0.05);
\filldraw[black] (-0.400,0.000) circle (0.05);
\filldraw[black] (1.600,0.000) circle (0.05);
\filldraw[black] (0.600,1.732) circle (0.05);
\filldraw[black] (0.600,-1.732) circle (0.05);
\end{tikzpicture}
\\[0.3cm]

\begin{tikzpicture}[scale=\graphscale, line cap=round, line join=round]
\node[anchor=north west, inner sep=1pt, text=black!65, font=\scriptsize] at (-4.0,3.3) {(9)};
\draw[col0,line width=1.2pt] (-2.986,0.000) -- (-0.986,0.000);
\draw[col1,line width=1.2pt] (-0.986,0.000) -- (0.746,1.000);
\draw[col2,line width=1.2pt] (-0.986,0.000) -- (0.746,-1.000);
\draw[col3,line width=1.2pt] (0.746,1.000) -- (0.746,-1.000);
\draw[col4,line width=1.2pt] (0.746,1.000) -- (2.478,0.000);
\draw[col5,line width=1.2pt] (0.746,-1.000) -- (2.478,0.000);
\fill[black!17,opacity=0.45] (-2.986,0.000) circle (1.000);
\fill[black!17,opacity=0.45] (-0.986,0.000) circle (1.000);
\fill[black!17,opacity=0.45] (0.746,1.000) circle (1.000);
\fill[black!17,opacity=0.45] (0.746,-1.000) circle (1.000);
\fill[black!17,opacity=0.45] (2.478,0.000) circle (1.000);
\draw[gray,line width=0.8pt] (-2.986,0.000) circle (1.000);
\draw[gray,line width=0.8pt] (-0.986,0.000) circle (1.000);
\draw[gray,line width=0.8pt] (0.746,1.000) circle (1.000);
\draw[gray,line width=0.8pt] (0.746,-1.000) circle (1.000);
\draw[gray,line width=0.8pt] (2.478,0.000) circle (1.000);
\draw[diskhull]
  (-3.245,-0.966)
  -- (0.487,-1.966)
  arc[start angle=-105.0,end angle=-60.0,radius=1.000]
  -- (2.978,-0.866)
  arc[start angle=-60.0,end angle=60.0,radius=1.000]
  -- (1.246,1.866)
  arc[start angle=60.0,end angle=105.0,radius=1.000]
  -- (-3.245,0.966)
  arc[start angle=105.0,end angle=255.0,radius=1.000];
\filldraw[black] (-2.986,0.000) circle (0.05);
\filldraw[black] (-0.986,0.000) circle (0.05);
\filldraw[black] (0.746,1.000) circle (0.05);
\filldraw[black] (0.746,-1.000) circle (0.05);
\filldraw[black] (2.478,0.000) circle (0.05);
\end{tikzpicture}
&
\begin{tikzpicture}[scale=\graphscale, line cap=round, line join=round]
\node[anchor=north west, inner sep=1pt, text=black!65, font=\scriptsize] at (-2.8,3.3) {$(10)$};
\draw[col0,line width=1.2pt] (-1.546,-1.000) -- (0.454,-1.000);
\draw[col1,line width=1.2pt] (-1.546,-1.000) -- (-1.546,1.000);
\draw[col2,line width=1.2pt] (0.454,-1.000) -- (0.454,1.000);
\draw[col3,line width=1.2pt] (0.454,-1.000) -- (2.186,0.000);
\draw[col4,line width=1.2pt] (-1.546,1.000) -- (0.454,1.000);
\draw[col5,line width=1.2pt] (0.454,1.000) -- (2.186,0.000);
\fill[black!17,opacity=0.45] (-1.546,-1.000) circle (1.000);
\fill[black!17,opacity=0.45] (0.454,-1.000) circle (1.000);
\fill[black!17,opacity=0.45] (-1.546,1.000) circle (1.000);
\fill[black!17,opacity=0.45] (0.454,1.000) circle (1.000);
\fill[black!17,opacity=0.45] (2.186,0.000) circle (1.000);
\draw[gray,line width=0.8pt] (-1.546,-1.000) circle (1.000);
\draw[gray,line width=0.8pt] (0.454,-1.000) circle (1.000);
\draw[gray,line width=0.8pt] (-1.546,1.000) circle (1.000);
\draw[gray,line width=0.8pt] (0.454,1.000) circle (1.000);
\draw[gray,line width=0.8pt] (2.186,0.000) circle (1.000);
\draw[diskhull]
  (-1.546,-2.000)
  -- (0.454,-2.000)
  arc[start angle=-90.0,end angle=-60.0,radius=1.000]
  -- (2.686,-0.866)
  arc[start angle=-60.0,end angle=60.0,radius=1.000]
  -- (0.954,1.866)
  arc[start angle=60.0,end angle=90.0,radius=1.000]
  -- (-1.546,2.000)
  arc[start angle=90.0,end angle=180.0,radius=1.000]
  -- (-2.546,-1.000)
  arc[start angle=-180.0,end angle=-90.0,radius=1.000];
\filldraw[black] (-1.546,-1.000) circle (0.05);
\filldraw[black] (0.454,-1.000) circle (0.05);
\filldraw[black] (-1.546,1.000) circle (0.05);
\filldraw[black] (0.454,1.000) circle (0.05);
\filldraw[black] (2.186,0.000) circle (0.05);
\end{tikzpicture}
&
\begin{tikzpicture}[scale=\graphscale, line cap=round, line join=round]
\node[anchor=north west, inner sep=1pt, text=black!65, font=\scriptsize] at (-2.9,2.9) {$(11)$};
\draw[col0,line width=1.2pt] (0.000,1.701) -- (-1.618,0.526);
\draw[col1,line width=1.2pt] (0.000,1.701) -- (1.618,0.526);
\draw[col2,line width=1.2pt] (-1.618,0.526) -- (-1.000,-1.376);
\draw[col3,line width=1.2pt] (-1.000,-1.376) -- (1.000,-1.376);
\draw[col4,line width=1.2pt] (1.000,-1.376) -- (1.618,0.526);
\fill[black!17,opacity=0.45] (0.000,1.701) circle (1.000);
\fill[black!17,opacity=0.45] (-1.618,0.526) circle (1.000);
\fill[black!17,opacity=0.45] (-1.000,-1.376) circle (1.000);
\fill[black!17,opacity=0.45] (1.000,-1.376) circle (1.000);
\fill[black!17,opacity=0.45] (1.618,0.526) circle (1.000);
\draw[gray,line width=0.8pt] (0.000,1.701) circle (1.000);
\draw[gray,line width=0.8pt] (-1.618,0.526) circle (1.000);
\draw[gray,line width=0.8pt] (-1.000,-1.376) circle (1.000);
\draw[gray,line width=0.8pt] (1.000,-1.376) circle (1.000);
\draw[gray,line width=0.8pt] (1.618,0.526) circle (1.000);
\draw[diskhull]
  (-2.569,0.217)
  -- (-1.951,-1.685)
  arc[start angle=-162.0,end angle=-90.0,radius=1.000]
  -- (1.000,-2.376)
  arc[start angle=-90.0,end angle=-18.0,radius=1.000]
  -- (2.569,0.217)
  arc[start angle=-18.0,end angle=54.0,radius=1.000]
  -- (0.588,2.510)
  arc[start angle=54.0,end angle=126.0,radius=1.000]
  -- (-2.206,1.335)
  arc[start angle=126.0,end angle=198.0,radius=1.000];
\filldraw[black] (0.000,1.701) circle (0.05);
\filldraw[black] (-1.618,0.526) circle (0.05);
\filldraw[black] (-1.000,-1.376) circle (0.05);
\filldraw[black] (1.000,-1.376) circle (0.05);
\filldraw[black] (1.618,0.526) circle (0.05);
\end{tikzpicture}
&
\begin{tikzpicture}[scale=\graphscale, line cap=round, line join=round]
\node[anchor=north west, inner sep=1pt, text=black!65, font=\scriptsize] at (-2.9,3.5) {$(12)$};
\draw[col0,line width=1.2pt] (-2.000,-0.693) -- (0.000,-0.693);
\draw[col1,line width=1.2pt] (-2.000,-0.693) -- (-1.000,1.039);
\draw[col2,line width=1.2pt] (0.000,-0.693) -- (2.000,-0.693);
\draw[col3,line width=1.2pt] (0.000,-0.693) -- (-1.000,1.039);
\draw[col4,line width=1.2pt] (0.000,-0.693) -- (1.000,1.039);
\draw[col5,line width=1.2pt] (2.000,-0.693) -- (1.000,1.039);
\draw[col6,line width=1.2pt] (-1.000,1.039) -- (1.000,1.039);
\fill[black!17,opacity=0.45] (-2.000,-0.693) circle (1.000);
\fill[black!17,opacity=0.45] (0.000,-0.693) circle (1.000);
\fill[black!17,opacity=0.45] (2.000,-0.693) circle (1.000);
\fill[black!17,opacity=0.45] (-1.000,1.039) circle (1.000);
\fill[black!17,opacity=0.45] (1.000,1.039) circle (1.000);
\draw[gray,line width=0.8pt] (-2.000,-0.693) circle (1.000);
\draw[gray,line width=0.8pt] (0.000,-0.693) circle (1.000);
\draw[gray,line width=0.8pt] (2.000,-0.693) circle (1.000);
\draw[gray,line width=0.8pt] (-1.000,1.039) circle (1.000);
\draw[gray,line width=0.8pt] (1.000,1.039) circle (1.000);
\draw[diskhull]
  (-2.000,-1.693)
  -- (2.000,-1.693)
  arc[start angle=-90.0,end angle=30.0,radius=1.000]
  -- (1.866,1.539)
  arc[start angle=30.0,end angle=90.0,radius=1.000]
  -- (-1.000,2.039)
  arc[start angle=90.0,end angle=150.0,radius=1.000]
  -- (-2.866,-0.193)
  arc[start angle=150.0,end angle=270.0,radius=1.000];
\filldraw[black] (-2.000,-0.693) circle (0.05);
\filldraw[black] (0.000,-0.693) circle (0.05);
\filldraw[black] (2.000,-0.693) circle (0.05);
\filldraw[black] (-1.000,1.039) circle (0.05);
\filldraw[black] (1.000,1.039) circle (0.05);
\end{tikzpicture}
\\[0.7cm]

\multicolumn{4}{c}{
\begin{tikzpicture}[scale=\graphscale, line cap=round, line join=round]
\node[anchor=north west, inner sep=1pt, text=black!65, font=\scriptsize] at (-4.6,3.2) {(13)};

\coordinate (C1) at (-4.000,0.000);
\coordinate (C2) at (-2.268,1.000);
\coordinate (C3) at (-0.536,0.000);
\coordinate (C4) at (1.196,1.000);
\coordinate (C5) at (2.928,0.000);

\draw[col0,line width=1.2pt] (C1) -- (C2);
\draw[col1,line width=1.2pt] (C2) -- (C3);
\draw[col2,line width=1.2pt] (C3) -- (C4);
\draw[col3,line width=1.2pt] (C4) -- (C5);

\fill[black!17,opacity=0.45] (C1) circle (1.000);
\fill[black!17,opacity=0.45] (C2) circle (1.000);
\fill[black!17,opacity=0.45] (C3) circle (1.000);
\fill[black!17,opacity=0.45] (C4) circle (1.000);
\fill[black!17,opacity=0.45] (C5) circle (1.000);

\draw[gray,line width=0.8pt] (C1) circle (1.000);
\draw[gray,line width=0.8pt] (C2) circle (1.000);
\draw[gray,line width=0.8pt] (C3) circle (1.000);
\draw[gray,line width=0.8pt] (C4) circle (1.000);
\draw[gray,line width=0.8pt] (C5) circle (1.000);

\draw[diskhull]
  (-4.000,-1.000)
  -- (2.928,-1.000)
  arc[start angle=-90,end angle=60,radius=1.000]
  -- (1.696,1.866)
  arc[start angle=60,end angle=90,radius=1.000]
  -- (-2.268,2.000)
  arc[start angle=90,end angle=120,radius=1.000]
  -- (-4.500,0.866)
  arc[start angle=120,end angle=270,radius=1.000];

\filldraw[black] (C1) circle (0.05);
\filldraw[black] (C2) circle (0.05);
\filldraw[black] (C3) circle (0.05);
\filldraw[black] (C4) circle (0.05);
\filldraw[black] (C5) circle (0.05);
\end{tikzpicture}
}
\\[0.1cm]

\end{tabular}
\caption{Realisations
\({\bf c}_1,\ldots,{\bf c}_{13}\) belonging to the fixed hull and contact
classes \(X_1,\ldots,X_{13}\) displayed in the finite verification.}
\label{fig:n5-graphs}
\end{figure}

\section{The five disk perimeter problem}
\label{sec:n5}

As an application, we use the preceding theory to solve the five disk
perimeter problem. The computational details are described in
Appendix~\ref{app:computational-verification}, and the accompanying code,
input data, and verification logs are available in \cite{repo}.

\subsection{Graph enumeration and realisability filtering}
\label{sec:enum-filter}
It suffices to consider connected contact graphs, since any disconnected
minimising configuration can be replaced by an incident configuration with an
additional contact and no larger centre hull perimeter.

A contact graph of an equal radius hard disk configuration is called a penny
graph. Such graphs are planar, have maximum degree at most \(6\), and satisfy
Harborth's edge bound \cite{harborth}
$
|E|\le\bigl\lfloor 3n-\sqrt{12n-3}\,\bigr\rfloor,
$
which gives \(|E|\le 7\) for \(n=5\). To generate the candidate list, we use \texttt{plantri}, reduce the output
up to isomorphism with \texttt{nauty}, restrict the maximum degree and edge
count with \texttt{pickg} \cite{brinkmannmckay2007,mckaypiperno2014}, and
retain only those graphs realised by centre configurations in which every
active edge has length two and every non edge distance is strictly greater
than two. This yields exactly thirteen contact classes, denoted by \(X_1,\ldots,X_{13}\), in agreement with the value \(a(5)=13\) in OEISA085632, which counts connected penny graphs on five vertices up to graph isomorphism \cite{oeisA085632}. The enumeration and the associated fixed hull and contact classes within each \(X_i\) are documented in \cite{repo}.

\subsection{Contact and hull classes}
The perimeter analysis is organised at two levels. First, configurations are
classified by their contact class, determined by the contact graph. Each
contact class then splits into finitely many fixed hull and contact classes,
according to the hull vertex set and cyclic hull order, as discussed in Example~\ref{ex:same-contact-different-hull}.

Figure~\ref{fig:n5-graphs} displays one representative of each contact class \(X_1,\ldots,X_{13}\), chosen in general to exhibit a parallel leaf realisation of the contact graph. These representatives are selected for geometric clarity and are not a priori canonical. Contact classes admit a unique hull refinement, whereas others admit
multiple hull classes, as illustrated by \(X_{12}\) and \(X_{13}\),
respectively.

\begin{theorem}
\label{thm:n5-final}
The minimum perimeter among all configurations of five unit hard disks is
\[
10+2\pi.
\]
It is attained precisely by configurations in the contact classes
\(X_{10},X_{11},X_{12}\). The minimising configurations in \(X_{10}\) and
\(X_{11}\) admit local perimeter preserving admissible flexes, while the
minimising configuration in \(X_{12}\) is rigid modulo rigid motions.
\end{theorem}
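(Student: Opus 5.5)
By Theorem~\ref{thm:existence-global-min} a global minimiser exists, and by Proposition~\ref{prop:minkowski-shift} it suffices to minimise the centre hull perimeter $\Per(P({\bf c}))$ and show its minimum is $10$. By the reduction in Section~\ref{sec:enum-filter}, a minimiser may be taken to have connected contact graph, so it lies in the closure of one of the thirteen classes $X_1,\dots,X_{13}$. Each of these splits into finitely many fixed hull and contact classes. The plan is to show, class by class, that no configuration has centre hull perimeter below $10$, and that the value $10$ is reached exactly in $X_{10},X_{11},X_{12}$.

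\textbf{Upper bound and the flexibility claims.} I first exhibit the value $10$ in the three named classes.

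In $X_{11}$, the pentagon cycle, every hull edge is a contact edge of length $2$, so the perimeter is identically $10$. The reduced rolling space has dimension $2n-|E|-3=10-5-3=2$.

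In $X_{10}$, the square with a triangle attached, every hull edge is again a contact edge. The hull edges are $2+2+2+2+2$, since the shared edge is interior. The reduced space has dimension $10-6-3=1$.

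In both cases Proposition~\ref{prop:flat-class} applies. It gives perimeter preserving flexes of dimensions two and one, and shows these minimisers are nonisolated. Openness of the class must be checked: no new contact forms and the hull stays fixed for small flexes.

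In $X_{12}$, with $7$ edges, Harborth's bound is attained. Checking the rank of $A({\bf c})$ gives $\Roll({\bf c})=R({\bf c})$, so this configuration is rigid by Proposition~\ref{prop:rigid-reduced-class}. Its hull edges have lengths $4,2,2,2$, giving perimeter $10$.

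\textbf{Lower bound.} For the remaining classes $X_1,\dots,X_9,X_{13}$, and for the non-minimising hull refinements of $X_{10},X_{11},X_{12}$, I argue in two steps.

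First, I prune the relative interior of each fixed class. The hull leaf obstruction (Theorem~\ref{thm:leaf-interior-descent}) handles most classes containing leaves: $X_1,X_2,X_3,X_4,X_5,X_6,X_9,X_{13}$. Proposition~\ref{prop:direct-first order-exclusion} handles the others, using an explicit descent vector in $\Roll({\bf c})$. Consequently any minimiser in such a class lies on the boundary of the class, which is an incident class with more contacts or a degenerate hull. For example, a leaf rolls until it touches a second disk.

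Second, for class critical survivors, I compute the multipliers and the intrinsic Hessian $\mathcal H_{\mathrm{intr}}$. A negative eigenvalue excludes the point by Proposition~\ref{prop:second order-exclusion}. Otherwise I compare its perimeter directly with $10$.

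Since every class has strictly fewer constraints than its boundary, descent terminates in the maximal strata. These are the rigid or flat classes $X_7,X_8,X_{10},X_{11},X_{12}$ with their refinements. There the perimeter is computed explicitly. For instance, $X_7$, the bowtie, has perimeter $2\cdot 2\sqrt3+2+2>10$, and $X_8$ is likewise checked to exceed $10$. Lemma~\ref{lem:descent-adjacent} then converts the finite class-by-class inequalities into the global statement. The same lemma, with equality holding only in $X_{10},X_{11},X_{12}$, yields the characterisation of the minimisers.

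\textbf{Main obstacle.} The hardest step is making the lower bound rigorous on non-compact, non-rigid classes, where the perimeter is not constant: $X_2,X_4,X_6,X_9$, and $X_{13}$ with its several hull cycles. Here one must ensure that descent cannot approach some infimum below $10$ along a flex that changes hull combinatorics. My first attempt is to parametrise each such class by its joint angles, which gives at most a four-dimensional space. On this space I would bound the perimeter below by $10$ using elementary inequalities: each hull edge is at least the sum of projections of contact edges, and a path of contact edges spanning the hull contributes at least the diameter twice. Where such inequalities are not sharp, I would fall back on the certified numerical checks of Appendix~\ref{app:computational-verification}.
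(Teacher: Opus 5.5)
Your treatment of the survivors $X_{10},X_{11},X_{12}$ agrees with the paper, but the lower bound contains a false step. $X_7$ and $X_8$ are not rigid or flat terminal strata. Both have six contacts and $\rank A({\bf c})=6$, so $\dim\mathcal R_{\bf c}=1$, and the perimeter is not constant along the flex.

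For the bowtie, all four outer centres lie on the circle of radius $2$ about the shared centre. The two free angular gaps are $120^\circ\mp\psi$ with $|\psi|<60^\circ$, and the chord formula gives $\Per(P({\bf c}))=4+4\sqrt3\cos(\psi/2)$. Rotating the leaf of $X_8$ about its neighbour gives exactly the same formula. So $4+4\sqrt3$ is the \emph{maximum} over each class, and the infimum is exactly $10$. It is approached as a new contact forms and the configuration degenerates into the $X_{12}$ trapezoid. Evaluating at one representative therefore proves nothing, and this is precisely where the equality statement ``attained only in $X_{10},X_{11},X_{12}$'' is decided.

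The paper excludes $X_7$ by strict concavity of the chord sum $\sum 4\sin(\theta_i/2)$ restricted to an affine face of the angular domain. It excludes $X_8$, which has a hull leaf just like $X_9$, by the leaf obstruction followed by second order instability. Your ``terminal strata'' picture is also off. The only $7$ edge class is $X_{12}$. Class boundaries also arise from hull changes with no new contact, so counting constraints does not make descent terminate.

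The obstacle you leave open is partly self-inflicted, and your remedy (unspecified projection inequalities with a numerical fallback) is not an argument. No class-wise lower bound by $10$ is needed. A global minimiser exists, lies in exactly one contact class, and is a local minimiser of $\Per$ there. So it suffices to show that no relative interior point of each excluded class is a local minimiser on that class. Lemma~\ref{lem:descent-adjacent} plays no role here, since it only yields local minimality.

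What is missing is actually carrying out this exclusion. The hull leaf obstruction (Theorem~\ref{thm:leaf-interior-descent}) does not by itself dispose of $X_1,\dots,X_6,X_9,X_{13}$. It only removes the nonparallel leaf locus, and the parallel leaf locus must then be identified and treated case by case:
\begin{itemize}
\item For $X_1$ the parallel locus is the symmetric star, excluded by concavity or a negative intrinsic eigenvalue.
\item For $X_2,X_3,X_5,X_6,X_8,X_9$, the paper shows the parallel leaf realisations are class critical with a negative intrinsic Hessian eigenvalue.
\item For $X_4$ the parallel locus is a whole noncritical family, $a\in(1,\sqrt3)$ with $\Per(P({\bf c}))=4+4\sqrt{2+a}$. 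It is excluded by an explicit first order descent path that decreases $a$ toward the $X_8$ boundary.
\end{itemize}
Without these steps the lower bound, and hence the theorem, is not established.
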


\begin{proof}
By Theorem~\ref{thm:existence-global-min}, a global minimiser exists. 

{\bf{Finite reduction.}} By Subsection~\ref{sec:enum-filter}, every contact class realised by a five
disk minimiser is one of \(X_1,\ldots,X_{13}\). The complete enumeration and
filtering are recorded in \cite{repo}.

We now exclude the classes that cannot contain a perimeter minimiser.

{\bf{The circular types \(X_1\) and \(X_7\).}} For \(X_1\), place the centre of the unique non hull disk at the origin. The centres of the four hull disks then lie on the circle of radius two centred at the origin. For a fixed cyclic hull order, let
\(\theta_1,\ldots,\theta_4\) be the consecutive pivot gap angles. The relative
interior of the class is parametrised by the convex domain
\[
\Theta
=
\left\{
\theta\in\mathbb R^4:
\theta_i>\frac{\pi}{3},\quad
\sum_{i=1}^4\theta_i=2\pi
\right\}.
\]
Since
\[
\theta_i
=
2\pi-\sum_{j\ne i}\theta_j
<
2\pi-3\cdot\frac{\pi}{3}
=
\pi,
\]
it follows that
\[
\frac{\theta_i}{2}
\in
\left(
\frac{\pi}{6},
\frac{\pi}{2}
\right)
\qquad
(i=1,\ldots,4).
\]
Using the chord length formula, the centre hull perimeter is
\[
p(\theta)
=
\sum_{i=1}^4
4\sin\left(\frac{\theta_i}{2}\right).
\]

The Hessian of \(p\) in \(\mathbb R^4\) is
\[
D^2p(\theta)
=
\operatorname{diag}\!\left(
-\sin\left(\frac{\theta_1}{2}\right),
\ldots,
-\sin\left(\frac{\theta_4}{2}\right)
\right).
\]
Restricting to
\[
T_\theta\Theta
=
\left\{
\eta\in\mathbb R^4:
\sum_{i=1}^4\eta_i=0
\right\},
\]

Since each diagonal entry is negative, \(D^2p(\theta)\) is negative definite
and therefore so is its restriction to
\[
T_\theta\Theta
=
\left\{
\eta\in\mathbb R^4:
\sum_{i=1}^4\eta_i=0
\right\}.
\]

Hence \(p\) is strictly concave on \(\Theta\) and has no relative interior
local minimum. 

For \(X_7\), two additional active contacts impose two angular equalities, so
the relative interior of each hull class is the relative interior of an affine
face of \(\Theta\). The restriction of a strictly concave function to an affine
face remains strictly concave. Hence \(p\) has no relative interior local
minimum on the relative interior of any hull class assigned to \(X_7\).

Consequently, no hull class assigned to \(X_1\) or \(X_7\) contains a local
minimiser.

{\bf{The hull leaf types.}}
Every hull class of
\[
X_2,X_3,X_5,X_6,X_8,X_9,X_{13}
\]
has a hull leaf. By Theorem~\ref{thm:leaf-interior-descent}, every realisation
for which the leaf parallelism condition fails admits an admissible first
order perimeter descent and is therefore not a local minimiser.

For \(X_{13}\), configurations that do not define convex polygonal hulls,
including the completely horizontal one, are
excluded directly by Theorem~\ref{thm:leaf-interior-descent}.

For the hull classes of
\[
X_2,X_3,X_5,X_6,X_8,X_9,
\]
the verification in \cite{repo} identifies all parallel leaf realisations,
modulo rigid motions. Each such class is critical and has a negative intrinsic Hessian eigenvalue on the reduced rolling space. Proposition~5.7 excludes these realisations.
Consequently, no hull class assigned to these contact classes contains a local minimiser.

Table~\ref{tab:n5-classes} records the rank, reduced rolling dimension, and
certified intrinsic eigenvalue data for the displayed representatives in Figure~\ref{fig:n5-graphs}. 

{\bf{The exceptional case \(X_4\).}} The nonparallel leaf locus of every class assigned to \(X_4\) is excluded by
Theorem~\ref{thm:leaf-interior-descent}. Let \({\bf c}\) be a relative
interior realisation in the parallel leaf locus. Up to rigid motion,
reflection, and relabelling, it has the form
\[
c_1=(-a,0),\qquad
c_2=(0,b),\qquad
c_3=(a,0),\qquad
c_4=(0,-b),\qquad
c_0=(-a-2,0),
\]
where
\[
a^2+b^2=4.
\]

The strict inactive conditions
\[
\lVert c_1-c_3\rVert>2,
\qquad
\lVert c_2-c_4\rVert>2
\]
are equivalent to
\[
1<a<\sqrt3.
\]
Set
\[
a(t)=a-t,
\qquad
b(t)=\sqrt{4-a(t)^2},
\qquad
c_0(t)=(-a(t)-2,0),
\]
and define \(c_1(t),\ldots,c_4(t)\) by the same coordinate formulas. The five
active contacts are preserved identically. For all sufficiently small
\(t>0\), the inactive inequalities remain strict, since
\[
a(t)>1,
\qquad
b(t)>b>1,
\]
\[
\lVert c_0(t)-c_3(t)\rVert=2a(t)+2>2,
\]
and
\[
\lVert c_0(t)-c_j(t)\rVert^2=8+4a(t)>4,
\qquad j=2,4.
\]
Hence \({\bf c}(t)\in\mathcal D_5\).

The quadrilateral
\[
c_0(t)c_2(t)c_3(t)c_4(t)
\]
is convex, and its intersection with the \(x\)-axis contains
\(c_1(t)=(-a(t),0)\) in the open segment joining \(c_0(t)\) to \(c_3(t)\).
Thus the hull vertices remain
\[
c_0(t),c_2(t),c_3(t),c_4(t).
\]
Moreover,
\[
\lVert c_2(t)-c_3(t)\rVert
=
\lVert c_3(t)-c_4(t)\rVert
=
2
\]
and
\[
\lVert c_0(t)-c_2(t)\rVert
=
\lVert c_0(t)-c_4(t)\rVert
=
2\sqrt{2+a(t)}.
\]
Therefore
\[
p(t)
=
\Per(P({\bf c}(t)))
=
4+4\sqrt{2+a(t)}
\]
and
\[
p'(0)
=
-\frac{2}{\sqrt{2+a}}
<0.
\]
Every relative interior point of the parallel leaf locus therefore admits strict first order descent.

As the parameter approaches to \(a=1\), the additional contact
$
\|c_1-c_3\|=2
$
forms, and the limiting configuration belongs to a hull class of \(X_8\). When \(a=\sqrt3\), the additional contact
$
\|c_2-c_4\|=2
$
forms, and the resulting configuration belongs to a hull class of \(X_9\). These boundary classes are hull leaf types and therefore excluded. Hence no fixed hull and
contact class assigned to \(X_4\) contains a local minimiser.

{\bf{The surviving classes.}}
By the preceding exclusions, every global minimiser belongs to one of the
contact classes
\[
X_{10},X_{11},X_{12}.
\]

Each has a unique hull class, considered modulo rigid motions.
For \(X_{10}\) and \(X_{11}\), every hull edge is an active contact of length
\(2\), so
$
\Per(P({\bf c}))=10.
$

For \(X_{12}\), the certified centre hull is an isosceles trapezoid with side
lengths \(4,2,2,2\), and therefore
$
\Per(P({\bf c}))=4+2+2+2=10.
$

For the realisations
\({\bf c}_{10},{\bf c}_{11},{\bf c}_{12}\) in
Figure~\ref{fig:n5-graphs},
\[
\operatorname{rank}A({\bf c}_{10})=6,\qquad
\operatorname{rank}A({\bf c}_{11})=5,\qquad
\operatorname{rank}A({\bf c}_{12})=7.
\]

 Hence the corresponding reduced rolling spaces have dimensions \(1\), \(2\), and \(0\), respectively. Proposition~\ref{prop:flat-class} shows that the minimising configurations in \(X_{10}\) and \(X_{11}\) form local perimeter preserving admissible families of dimensions \(1\) and \(2\), respectively, while Proposition~\ref{prop:rigid-reduced-class} shows that the minimising configuration in \(X_{12}\) is rigid modulo rigid motions.

\end{proof}

\begin{table}[ht]
\caption{Ranks of the contact operators \(A({\bf c}_i)\), dimensions of
the reduced rolling spaces, exclusion outcomes, and smallest intrinsic
eigenvalues, where defined, for the thirteen five disk classes \(X_i\).
Representative realisations \({\bf c}_i\) are shown in
Figure~\ref{fig:n5-graphs}. Detailed spectral and perimeter data for the
surviving classes \(X_{10},X_{11},X_{12}\) are given in
Table~\ref{tab:five-disk-spectrum}. The collinear class \(X_{13}\) has no
polygonal intrinsic Hessian and is excluded by the hull leaf obstruction,
Theorem~\ref{thm:leaf-interior-descent}.}
\centering
\renewcommand{\arraystretch}{1.15}
\setlength{\tabcolsep}{8pt}
\begin{tabular}{ccccc}
\toprule
Class & \(\rank A({\bf c})\) & \(\dim\mathcal R_{\bf c}\) & Outcome
& \(\lambda_{\min}(\mathcal H_{\mathrm{intr}})\) \\
\midrule
\(X_1\) & \(4\) & \(3\) & excluded & \([-3.5859,-3.5858]\) \\
\(X_2\) & \(4\) & \(3\) & excluded & \([-6.6404,-6.6403]\) \\
\(X_3\) & \(5\) & \(2\) & excluded & \([-5.7482,-5.7481]\) \\
\(X_4\) & \(5\) & \(2\) & excluded & \([-3.0960,-3.0959]\) \\
\(X_5\) & \(5\) & \(2\) & excluded & \([-12.204,-7.2100]\) \\
\(X_6\) & \(5\) & \(2\) & excluded & \([-6.3477,-6.3476]\) \\
\(X_7\) & \(6\) & \(1\) & excluded & \([-6.9283,-6.9282]\) \\
\(X_8\) & \(6\) & \(1\) & excluded & \([-2.4425,-2.4424]\) \\
\(X_9\) & \(6\) & \(1\) & excluded & \([-4.3207,-4.3206]\) \\
\(X_{10}\) & \(6\) & \(1\) & \textbf{survivor} & \(0\) \\
\(X_{11}\) & \(5\) & \(2\) & \textbf{survivor} & \(0\) \\
\(X_{12}\) & \(7\) & \(0\) & \textbf{survivor} & \(\mathrm{n/a}\) \\
\(X_{13}\) & \(4\) & \(3\) & excluded & \(\mathrm{n/a}\) \\
\bottomrule
\end{tabular}

\label{tab:n5-classes}
\end{table}

\begin{table}[ht]
\caption{Structural data for the three minimising five disk classes, where
\(\mathcal R_{\bf c}=\Roll({\bf c})\cap R({\bf c})^\perp\) and
\(\dim R({\bf c})=3\). For \(X_{10}\) and \(X_{11}\), every hull edge is an
active contact edge, so the perimeter is constant on the fixed hull and contact
class and the intrinsic Hessian vanishes on \(\mathcal R_{\bf c}\). The class \(X_{12}\) is rigid since
\(\Roll({\bf c})=R({\bf c})\), so
\(\mathcal R_{\bf c}=\{0\}\) and
\(\operatorname{spec}(\mathcal H_{\mathrm{intr}})=\varnothing\).}
\centering
\begin{tabular}{cccccccc}
\toprule
Class & \(\rank A({\bf c})\) & \(\dim\Roll({\bf c})\) & \(\dim\mathcal R_{\bf c}\) & \(\operatorname{spec}(\mathcal H_{\mathrm{intr}})\) & \(\Per(P({\bf c}))\) & \(\Per({\bf c})\) & Outcome \\
\midrule
\(X_{10}\) & \(6\) & \(4\) & \(1\) & \(\{0\}\) & \(10\) & \(10+2\pi\) & flat survivor \\
\(X_{11}\) & \(5\) & \(5\) & \(2\) & \(\{0,0\}\) & \(10\) & \(10+2\pi\) & flat survivor \\
\(X_{12}\) & \(7\) & \(3\) & \(0\) & \(\varnothing\) & \(10\) & \(10+2\pi\) & rigid survivor \\
\bottomrule
\end{tabular}

\label{tab:five-disk-spectrum}
\end{table}

\appendix
\section{Four disk spectral prototypes}
\label{sec:examples-four-disks}
We present four disks configuration examples illustrating the intrinsic spectrum
introduced in Section~\ref{sec:second-variation}. The point is not merely to
detect first order flexes, but to test the remaining admissible directions
modulo rigid motions by the second variation of the perimeter. Negative eigenvalues give second order descent directions, zero eigenvalues give
flat directions, and the absence of reduced directions gives rigidity modulo
rigid motions. The examples below show these alternatives in explicit
coordinates: failure of first order class criticality, flat second order
degeneracy, and rigidity modulo rigid motions. Figure~\ref{fig:34} illustrates the corresponding configurations.

\subsection{The square flat family through second order degeneracy}
\label{ex:square-degenerate}
Consider the square configuration
\[
{\bf c}_1=(0,0),\quad
{\bf c}_2=(2,0),\quad
{\bf c}_3=(2,2),\quad
{\bf c}_4=(0,2),
\]
with contact graph
\[
E(G)=\bigl\{\{1,2\},\{2,3\},\{3,4\},\{4,1\}\bigr\}
\]
and hull cycle
\[
\mathcal B=\{(1,2),(2,3),(3,4),(4,1)\}.
\]
This example gives a class critical configuration whose zero intrinsic
eigenvalue is realised by an actual perimeter preserving family. A direct
computation gives
\[
A({\bf c})^\top\lambda=\nabla\widetilde\Per({\bf c}),
\qquad
\lambda=(1,1,1,1),
\]
so \({\bf c}\) is class critical.
The fixed hull and contact class locally contains the rhombus family
\[
{\bf c}_1(\theta)=(0,0),\qquad
{\bf c}_2(\theta)=(2,0),\qquad
{\bf c}_4(\theta)=(2\cos\theta,2\sin\theta),
\]
\[
{\bf c}_3(\theta)={\bf c}_2(\theta)+{\bf c}_4(\theta),
\qquad
\theta\in\left(\frac{\pi}{3},\frac{2\pi}{3}\right).
\]
For \(\theta\) near \(\pi/2\), this remains in the same fixed hull and contact
class. The two noncontact diagonals have lengths \(4\sin(\theta/2)\) and
\(4\cos(\theta/2)\), so they remain strictly larger than \(2\) on this
interval. Each hull edge has length \(2\), hence
\[
\Per({\bf c}(\theta))=8+2\pi.
\]
Thus the perimeter is constant along this one parameter family.
A direct computation gives
\[
\operatorname{rank} A({\bf c})=4,
\qquad
\dim\Roll({\bf c})=8-4=4.
\]
Since \(\dim R({\bf c})=3\) and \(R({\bf c})\subset\Roll({\bf c})\),
\[
\dim\bigl(\Roll({\bf c})\cap R({\bf c})^\perp\bigr)=1.
\]
On this one dimensional reduced admissible space,
\[
\mathcal H_{\mathrm{intr}}=[0],
\qquad
\operatorname{spec}(\mathcal H_{\mathrm{intr}})=\{0\}.
\]
The zero intrinsic eigenvalue is tangent to this perimeter preserving rhombus
family. Thus the square is flat degenerate.
\begin{remark}[Flatness of contact hull cycles]
\label{rem:flat-contact-cycles}
The flatness in the square example above comes from the fact that every edge
of the fixed hull cycle is an active contact edge. Thus every hull edge length
remains equal to \(2\) along paths in the same fixed hull and contact class.
\end{remark}

\subsection{Rhombus with diagonal contact rigidity modulo rigid motions}
\label{ex:rhombus-rigid}
Consider
\[
{\bf c}_1=(0,0),\quad
{\bf c}_2=(2,0),\quad
{\bf c}_3=(3,\sqrt{3}),\quad
{\bf c}_4=(1,\sqrt{3}),
\]
with realised contact graph
\[
E(G)=\bigl\{\{1,2\},\{2,3\},\{3,4\},\{4,1\},\{2,4\}\bigr\}
\]
and hull cycle
\[
\mathcal B=\{(1,2),(2,3),(3,4),(4,1)\}.
\]
This example has the same hull cycle and the same perimeter as
Section~\ref{ex:square-degenerate}, but its reduced admissible space is
trivial. A direct computation gives
\[
A({\bf c})^\top\lambda=\nabla\widetilde\Per({\bf c}),
\qquad
\lambda=(1,1,1,1,0),
\]
in the displayed edge order, so \({\bf c}\) is class critical.
A direct computation also gives
\[
\operatorname{rank} A({\bf c})=5,
\qquad
\dim\Roll({\bf c})=8-5=3.
\]
Since \(R({\bf c})\subset\Roll({\bf c})\) and \(\dim R({\bf c})=3\),
\[
R({\bf c})=\Roll({\bf c}),
\qquad
\Roll({\bf c})\cap R({\bf c})^\perp=\{0\}.
\]
Thus there are no nontrivial reduced admissible directions. Unlike
Section~\ref{ex:square-degenerate}, no intrinsic Hessian has to be
diagonalised, and
\[
\operatorname{spec}(\mathcal H_{\mathrm{intr}})=\varnothing.
\]
Thus this rhombus is rigid modulo rigid motions.

\begin{table}[ht]
\centering
\caption{Variational data for the class critical four disk examples.}
\label{tab:four-disk-spectrum}
\renewcommand{\arraystretch}{1.18}
\begin{tabular}{p{0.34\textwidth}ccc}
\toprule
Data & Square & Rhombus with diagonal & Central star \\
\midrule
Class critical status
& yes
& yes
& yes \\
Class critical value of \(\Per\)
& \(8+2\pi\)
& \(8+2\pi\)
& \(6\sqrt{3}+2\pi\) \\
\(\operatorname{rank} A({\bf c})\)
& \(4\)
& \(5\)
& \(3\) \\
\(\dim\Roll({\bf c})=8-\operatorname{rank} A({\bf c})\)
& \(4\)
& \(3\)
& \(5\) \\
\(\dim\mathcal R_{\bf c}\)
& \(1\)
& \(0\)
& \(2\) \\
Intrinsic spectrum
& \(\{0\}\)
& \(\varnothing\)
& \(\left\{-\frac{3\sqrt{3}}{5},-\frac{3\sqrt{3}}{5}\right\}\) \\
Variational outcome
& flat
& rigid
& second order unstable \\
\bottomrule
\end{tabular}
\end{table}

\subsection{A central disk with three peripheral contacts}
\label{ex:central-three-peripheral}
Consider
\[
{\bf c}_1=(0,0),\quad
{\bf c}_2=(2,0),\quad
{\bf c}_3=(-1,\sqrt{3}),\quad
{\bf c}_4=(-1,-\sqrt{3}),
\]
with realised contact graph
\[
E(G)=\bigl\{\{1,2\},\{1,3\},\{1,4\}\bigr\}
\]
and hull cycle
\[
\mathcal B=\{(2,3),(3,4),(4,2)\}.
\]
This example gives a class critical configuration with genuine second order
descent directions. The three peripheral disks form an equilateral hull, while
the central disk is not a hull vertex. The noncontact distances between
peripheral centres are all \(2\sqrt{3}>2\), so the displayed contact graph is
exactly realised.
A direct computation gives
\[
A({\bf c})^\top\lambda=\nabla\widetilde\Per({\bf c}),
\qquad
\lambda=(\sqrt{3},\sqrt{3},\sqrt{3}),
\]
in the displayed edge order, so \({\bf c}\) is class critical.
A direct computation also gives
\[
\operatorname{rank} A({\bf c})=3,
\qquad
\dim\Roll({\bf c})=8-3=5.
\]
Since \(\dim R({\bf c})=3\) and \(R({\bf c})\subset\Roll({\bf c})\),
\[
\dim\bigl(\Roll({\bf c})\cap R({\bf c})^\perp\bigr)=2.
\]
After removing rigid motions, the remaining directions correspond to changing
the angular gaps between the three peripheral disks while preserving their
contacts with the central disk. With respect to an orthonormal basis of this two dimensional reduced admissible
space, a direct computation gives
\[
\operatorname{spec}(\mathcal H_{\mathrm{intr}})
=
\left\{-\frac{3\sqrt{3}}{5},-\frac{3\sqrt{3}}{5}\right\}.
\]
Thus the central configuration is class critical but second order unstable,
the intrinsic Hessian is negative definite on the two dimensional reduced
admissible space, so every nonzero reduced admissible direction is a second
order descent direction for the perimeter.

\subsection{The collinear path degeneration}
\label{ex:path-four}
Let \(X=\mathcal F\cap\mathcal C(G)\) be a fixed hull and contact class whose
contact graph is
\[
G=P_4,
\qquad
E(G)=\bigl\{\{1,2\},\{2,3\},\{3,4\}\bigr\},
\]
and whose nondegenerate hull cycle is
\[
\mathcal B=\{(1,2),(2,3),(3,4),(4,1)\}.
\]
This is the noncollinear fixed class with the displayed hull cycle. Collinear
realisations are boundary degenerations and are not interior regular points of
this fixed hull and contact class. The point of this example is to use the hull
leaf obstruction. The geometric contradiction is immediate once the relevant
collinearities are known, but those collinearities are forced by class
criticality through Theorem~\ref{thm:leaf-interior-descent}.
The vertices \(1\) and \(4\) are hull leaves. Suppose, for a contradiction,
that \({\bf c}\in X\) is relative interior regular and class critical, in the sense of
Proposition~\ref{prop:first order-criticality}. At the hull leaf \(1\), the
nodal perimeter gradient is
\[
{\bf g}_1={\bf t}_{41}-{\bf t}_{12}.
\]
By Theorem~\ref{thm:leaf-interior-descent}, this vector must be parallel to
\({\bf u}_{12}={\bf t}_{12}\). Hence \({\bf t}_{41}\) and
\({\bf t}_{12}\) are collinear. Similarly, at the hull leaf \(4\), the nodal
perimeter gradient is
\[
{\bf g}_4={\bf t}_{34}-{\bf t}_{41}.
\]
The same theorem forces this vector to be parallel to
\({\bf u}_{34}={\bf t}_{34}\). Hence \({\bf t}_{41}\) and
\({\bf t}_{34}\) are collinear. Therefore
\({\bf t}_{12},{\bf t}_{41},{\bf t}_{34}\) are all collinear, impossible for a
nondegenerate convex quadrilateral with cyclic hull edges
\[
(1,2),(2,3),(3,4),(4,1).
\]
Thus \(X\) contains no interior regular class critical configuration. This
class is excluded at the first order stage, so no class critical Hessian and no
intrinsic spectrum are assigned to this example.

\section{Computational verification of five disks }
\label{app:computational-verification}
This appendix records the computational certificate used in the finite verification
of Section~\ref{sec:n5}. The complete implementation is the Python script \texttt{supplementary\_verification.py} \\ ~\cite{repo}, which takes the dataset
\texttt{5disks.json} and generates all certificate data using interval arithmetic.

\subsection{Setup} For each class $X_i$, the input is the exact symbolic centre coordinate vector ${\bf c}$, the contact edge set $E(G)$, and the cyclic hull order $\mathcal{B}$. The analytical quantities perimeter gradient ${\bf g}$, contact operator $A({\bf c})$, and rigid motion space $R({\bf c})$ are computed using the definitions of Section~\ref{sec:fixed-class}. Integer-pivoted Gaussian elimination yields a basis $Z$ for $\Roll({\bf c})\cap R({\bf c})^\perp$.

\subsection{Classification} Each class is assigned one of four outcomes in order of priority: exclusion by hull leaf non parallelism (Theorem~\ref{thm:leaf-interior-descent}), exclusion by first order projected gradient descent (Proposition~\ref{prop:direct-first order-exclusion}), exclusion by admissible perturbation along $Z_{\cdot,1}$, certified by a negative pivot in the LDL$^\top$ decomposition of the intrinsic Hessian (Proposition~\ref{prop:second order-exclusion}), or survival. Class criticality for survivors is certified by bounding the first order multiplier residual below $10^{-50}$.

For a class with hull leaves, the verification does not use the displayed representative alone. It first reduces to the leaf parallel locus imposed by Theorem~\ref{thm:leaf-interior-descent}; the remaining candidates are then tested by admissible descent in the rolling space. Thus an exclusion
certificate for a leaf class means that both the nonparallel leaf locus and the residual leaf parallel locus have been excluded.



\end{document}